\documentclass[11pt]{article}
\usepackage{amsmath,amsthm,amssymb,hyperref}
\newcommand{\remove}[1]{}
\newcommand{\ignore}[1]{}
\setlength{\topmargin}{0.1in} \setlength{\headheight}{0in}
\setlength{\headsep}{0in} \setlength{\textheight}{8.3in}
\setlength{\topsep}{0.1in} \setlength{\itemsep}{0.0in}
\parskip=0.05in
\textwidth=6.2in \oddsidemargin=0truecm \evensidemargin=0truecm

\newtheorem{thm}{Theorem}[section]
\newtheorem{claim}[thm]{Claim}
\newtheorem{lem}[thm]{Lemma}
\newtheorem{define}[thm]{Definition}
\newtheorem{cor}[thm]{Corollary}

\newtheorem{example}[thm]{Example}

\newtheorem{THM}{Theorem}

\def\F{{\mathbb{F}}}

\def\R{{\mathbb{R}}}

\def\C{{\mathbb{C}}}

\newcommand{\ip}[2]{\langle #1,#2 \rangle}

\def\_{\,\,\,\,\,}

\def\span{\textsf{span}}

\def\arc{\textsf{circ}}
\def\line{\textsf{line}}
\def\dist{\textsf{dist}}

\newcommand{\eps}{\epsilon}

\begin{document}

\title{Sylvester-Gallai type theorems for approximate collinearity}
\author{Albert Ai \thanks{Department of of Mathematics, Princeton University, Princeton NJ.
Email: \texttt{aai@princeton.edu}.} \and Zeev Dvir \thanks{Department of Computer Science and Department of Mathematics, Princeton University, Princeton NJ.
Email: \texttt{zeev.dvir@gmail.com}. Research partially
supported by NSF grant CCF-0832797.} \and Shubhangi Saraf \thanks{Department of Computer Science and Department of Mathematics, Rutgers University.
Email: \texttt{shubhangi.saraf@gmail.com}.} \and Avi Wigderson \thanks{School of Mathematics, Institute for Advanced Study.
Email: \texttt{avi@ias.edu}.}}
\date{}
\maketitle

\begin{abstract}

We study questions in incidence geometry where the precise position of points is `blurry' (e.g. due to noise, inaccuracy or error). Thus lines are replaced by narrow tubes, and more generally affine subspaces are replaced by their small neighborhood. We show that the presence of a sufficiently large number of approximately collinear triples in a set of points in $\C^d$ implies that the points are {\em close} to a low dimensional affine subspace. This can be viewed as a stable variant of the Sylvester-Gallai theorem and its extensions.

Building on  the recently found connection between Sylvester-Gallai type theorems and complex  Locally Correctable Codes (LCCs), we define the new notion of {\em stable} LCCs, in which the (local) correction procedure can also handle small perturbations in the euclidean metric. We prove that such stable codes with constant query complexity do not exist. No impossibility results were known in any such local setting for more than 2 queries.
\end{abstract}


\section{Introduction}

The Sylvester-Gallai theorem is a statement about configurations of points in $\R^d$ in which there is a certain structure of  collinear triples.

\begin{thm}[Sylvester-Gallai]
Suppose $v_1,\ldots,v_n \in \R^d$ are such that for all $i \neq j \in [n]$ there is some $k \in [n] \setminus\{i,j\}$ for which $v_i,v_j,v_k$ are on a line. Then all the points $v_1,\ldots,v_n$ are on a single line. 
\end{thm}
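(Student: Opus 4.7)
The plan is to use Kelly's classical extremal-distance argument, which is the shortest known proof.

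First, I would reduce to the planar case. Given any two points $v_i,v_j$ and a witness $v_k$ collinear with them, any additional point $v_\ell$ off the line through them defines a 2-plane. However, the cleaner reduction is: assume for contradiction that the points are not all collinear; then they span some affine plane or higher, and we may project orthogonally onto a generic 2-dimensional affine subspace, which preserves non-collinearity of the whole configuration and preserves all collinear triples. So it suffices to prove the theorem in $\R^2$.

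Next, I set up the extremal pair. Consider the (finite, nonempty by the non-collinearity assumption) set of pairs $(p,\ell)$ where $\ell$ is a line spanned by at least two of the $v_i$ and $p$ is one of the $v_i$ not lying on $\ell$. Choose such a pair minimizing $\dist(p,\ell)$, and let $q$ be the foot of the perpendicular from $p$ to $\ell$.

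Now I apply the Sylvester--Gallai hypothesis to the line $\ell$: since $\ell$ contains at least two of the $v_i$, say $v_a$ and $v_b$, the hypothesis supplies a third point $v_c \in \ell$. Among the (at least three) points of the configuration lying on $\ell$, by the pigeonhole principle two of them, call them $p_1$ and $p_2$, lie weakly on the same side of $q$ on $\ell$, with $p_1$ at least as close to $q$ as $p_2$. I then consider the new pair $(p_1,\ell')$, where $\ell'$ is the line through $p$ and $p_2$. A short planar computation (drop the perpendicular from $p_1$ to $\ell'$ and compare similar right triangles formed by $p$, $q$, $p_2$ and their projections) shows
\[
\dist(p_1,\ell') \;<\; \dist(p,\ell),
\]
strictly, because $p_1$ is strictly between $q$ and $p_2$ (or equals $q$, in which case the inequality is even clearer). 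This contradicts the minimality of the chosen pair, and hence the non-collinearity assumption is false.

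The main obstacle, and really the only subtle point, is verifying the strict inequality $\dist(p_1,\ell')<\dist(p,\ell)$ in all sub-cases (in particular when $p_1=q$, or when $p_2$ lies between $q$ and $p_1$ is excluded by our choice). This is a routine similar-triangles check, but the ordering of $p_1,p_2$ relative to $q$ must be handled with care so that the new pair $(p_1,\ell')$ genuinely has $p_1 \notin \ell'$, i.e.\ so that $p_1$, $p$, $p_2$ are not themselves collinear; this follows because $p\notin\ell$ while $p_1,p_2\in\ell$.
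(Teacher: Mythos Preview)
The paper does not actually prove this statement: it is quoted in the introduction as the classical Sylvester--Gallai theorem, with no proof given, serving only as background before the paper's own results on approximate versions. So there is nothing in the paper to compare your argument against.

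That said, your proposal is the standard Kelly extremal-distance proof and is correct. One small remark: the reduction to $\R^2$ by generic projection is unnecessary. Once you have chosen the minimizing pair $(p,\ell)$, the point $p$ and the line $\ell$ together span an affine $2$-plane in $\R^d$, and the entire perpendicular-foot and similar-triangles argument takes place inside that plane; nothing about the ambient dimension enters. Dropping the projection step avoids having to justify that a generic projection preserves non-collinearity of the whole configuration (which is true but is an extra sentence you would otherwise owe the reader). The rest of your argument, including the case analysis around whether $p_1=q$, is fine.
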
 
 
This theorem takes {\em local} information about dependencies between points and concludes {\em global} information about the entire configuration. For more on the history and generalizations of this theorem we refer to the survey \cite{BM90}. A complex variant of this theorem was proved by Kelly:
 
\begin{thm}[\cite{Kel86}]
Suppose $v_1,\ldots,v_n \in \C^d$ are such that for all $i \neq j \in [n]$ there is some $k \in [n] \setminus\{i,j\}$ for which $v_i,v_j,v_k$ are on a (complex) line. Then all the points $v_1,\ldots,v_n$ lie on a single (complex) plane. 
\end{thm}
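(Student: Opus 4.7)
The plan is to reduce Kelly's theorem to a statement about complex line arrangements in $\C\P^2$ and invoke Hirzebruch's inequality for such arrangements. I would argue by contradiction: assume the complex-affine hull of $v_1,\ldots,v_n$ has complex dimension at least $3$, and derive an impossibility.

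\textbf{Reduction to $\C\P^2$.} First, apply a generic complex-linear map $\pi:\C^d\to\C^3$. Because $\pi$ is complex-linear it sends complex lines to complex lines, and a generic such $\pi$ is injective on the finite set $\{v_i\}$, creates no new collinear triples, and preserves complex-affine dimension. So the image configuration in $\C^3$ still satisfies the SG hypothesis and still spans a complex $3$-dimensional affine subspace. Embedding in $\C\P^3$ and taking a further generic projection to $\C\P^2$ produces $n$ distinct points $w_1,\ldots,w_n \in \C\P^2$, not all on a single complex line, still satisfying the SG hypothesis.

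\textbf{Dualization and Hirzebruch.} Dualize $w_i \mapsto \ell_i$, so that collinearity of $w_i,w_j,w_k$ becomes concurrency of $\ell_i,\ell_j,\ell_k$. The SG hypothesis now reads: every pairwise intersection $\ell_i\cap\ell_j$ lies on some third $\ell_k$; in the standard line-arrangement notation this says $t_2=0$, where $t_r$ counts the intersection points at which exactly $r$ of the lines meet. Since the $w_i$ are not collinear, the dual arrangement is neither a pencil nor concurrent, so Hirzebruch's inequality applies and gives
\[
t_2+t_3 \;\ge\; n+\sum_{r\ge 5}(2r-9)\,t_r.
\]
Combined with the incidence identity $\sum_{r\ge 2} \binom{r}{2}t_r=\binom{n}{2}$ and the hypothesis $t_2=0$, one obtains a contradiction, forcing the original configuration to have complex-affine dimension at most $2$.

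\textbf{Main obstacle.} The central difficulty is Hirzebruch's inequality itself, proved via Chern number inequalities on a suitable blow-up of $\C\P^2$ — a genuinely algebraic-geometric result for which no elementary combinatorial proof is known. Notably, the analogue over $\R$ (Melchior's inequality) gives $t_2 \ge 3$ and immediately yields the real SG conclusion that the points lie on a line; the weaker complex version is precisely why Kelly's conclusion must settle for a $2$-plane rather than a line. The remaining steps are routine but require careful genericity arguments to ensure that the successive projections preserve both the distinctness of points and the SG hypothesis.
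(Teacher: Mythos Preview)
The paper does not prove this theorem; it is cited as background from \cite{Kel86} (the paper also notes that \cite{DSW12} gives a new proof by different methods). So there is no proof in the paper to compare against, and I evaluate your argument on its own.

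There is a genuine gap in your final step. After projecting to $\C\P^2$ you obtain $n$ points spanning $\C\P^2$ with the SG property, dualize to a line arrangement with $t_2=0$, and then assert that Hirzebruch's inequality together with $\sum_{r\ge 2}\binom{r}{2}t_r=\binom{n}{2}$ yields a contradiction. It does not. The dual Hesse arrangement of $9$ lines in $\C\P^2$ has $t_2=0$, $t_3=12$, and $t_r=0$ for $r\ge 4$; it satisfies Hirzebruch's inequality (with equality in the standard form $t_2+\tfrac34\,t_3\ge n$, and strictly in the form you quoted) as well as the incidence identity ($3\cdot 12=\binom{9}{2}$). Thus ``SG configuration spanning $\C\P^2$'' is a perfectly consistent hypothesis, and no combinatorics carried out purely in $\C\P^2$ can rule it out --- this is precisely \emph{why} the complex theorem concludes only dimension $\le 2$ rather than $\le 1$.

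The underlying error is that your projection $\C\P^3\to\C\P^2$ discards exactly the information you need: that the points originally spanned a $3$-space. Once you land in $\C\P^2$ with no new collinearities, the resulting configuration is indistinguishable from one that was planar all along, so you are in effect trying to prove the (false) statement that every complex SG configuration lies on a line. A correct route via Hirzebruch must build a line arrangement in $\C\P^2$ that actually encodes the three-dimensionality of the original configuration; one cannot simply project the point set and dualize. As written, your reduction throws away the hypothesis before Hirzebruch is ever applied.
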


The global dimension bound given by Kelly's theorem is tight since, over the complex numbers, there are two-dimensional configurations of points satisfying the condition on triples. 

In a recent work, Barak et. al. \cite{BDWY11} proved quantitative (or fractional) analogs of Kelly's theorem in which the condition `for all $i \neq j \in [n]$' is relaxed and we have information only on a large subset of the pairs of points for which there exists a third collinear point\footnote{The sets of points satisfying the conditions of the theorem were called $\delta$-SG configurations in \cite{BDWY11}.}.
 
\begin{thm}[\cite{BDWY11}]\label{thm-bdwy}
Suppose $v_1,\ldots,v_n \in \C^d$ are such that for all $i \in [n]$ there exist at least $\delta (n-1)$ values of $j \in [n]\setminus \{i\}$ for which there is $k \in [n] \setminus\{i,j\}$ such that $v_i,v_j,v_k$ are on a  line. Then all the points $v_1,\ldots,v_n$ lie in an affine subspace of dimension $13/\delta^2$.
\end{thm}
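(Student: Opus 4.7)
The plan is to translate the combinatorial collinearity hypothesis into a rank-deficiency equation $AW = 0$ for a sparse, structured matrix $A$, and then lower-bound $\rank(A)$ via a design-matrix spectral argument; the affine dimension bound will drop out immediately from $\rank(W) \le n - \rank(A)$. To set this up, I lift the points to homogeneous coordinates $w_i = (v_i,1) \in \C^{d+1}$, so that collinearity of $v_i,v_j,v_k$ is equivalent to the existence of scalars $\alpha,\beta,\gamma$, not all zero, with $\alpha w_i + \beta w_j + \gamma w_k = 0$ (the last coordinate of this relation automatically forces $\alpha+\beta+\gamma=0$). For each $i$, the hypothesis supplies at least $\delta(n-1)$ pairs $(i,j)$ together with a witness $k(i,j)$, and for each such pair I record the unique (up to scalar) zero-sum dependence, normalized so the coefficient of $w_i$ equals $1$. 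Stacking these as rows yields a matrix $A$ of size $m \times n$ with $m \ge \delta n(n-1)$, and by construction $AW = 0$ where $W$ is the $n \times (d+1)$ matrix of lifted points.

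The heart of the proof is a lower bound on $\rank(A)$. By construction $A$ has at most three nonzero entries per row and at least $\delta(n-1)$ nonzero entries per column. After pruning duplicate triples and arranging the witness map $k(\cdot,\cdot)$ so that no unordered triple appears too many times (at the cost of a constant factor in the column density), the supports of any two distinct columns can be arranged to share only $O(1)$ rows, since a shared row must encode a collinear triple containing both $v_i$ and $v_j$. The matrix $A$ is then a \emph{design matrix}. The rank bound is obtained by studying the positive semidefinite Gram matrix $A^* A$: its diagonal entries are $\Omega(\delta n)$, while each off-diagonal entry has magnitude $O(1)$ thanks to the column-intersection bound, so $A^* A$ is a strong diagonal perturbed by a symmetric matrix of small operator norm. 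A Hermitian perturbation/trace argument then yields $\rank(A) = \rank(A^*A) \ge n - 13/\delta^2$.

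Combining $AW=0$ with the rank bound gives $\rank(W) \le 13/\delta^2$, and since the affine dimension of $v_1,\ldots,v_n$ equals $\rank(W)-1$, the theorem follows. The main obstacle, and the step where essentially all the work lives, is the design-matrix rank bound itself: over $\C$ this is not a generic-rank statement (the analogous statement fails over small-characteristic fields) but a delicate quantitative spectral inequality, and extracting the specific constant $13$ requires careful balancing of row sparsity, column density, and column-intersection parameters when bounding how many small eigenvalues $A^*A$ can have. A secondary but nontrivial obstacle is the combinatorial preprocessing of the witnesses $k(i,j)$ needed to secure the column-intersection hypothesis without losing more than a constant factor in column density.
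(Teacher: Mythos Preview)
The paper does not supply its own proof of this statement: Theorem~\ref{thm-bdwy} is quoted from \cite{BDWY11} as background, and the paper's original contributions are the approximate analogs (Theorems~\ref{thm-apxsgaffinesimple}--\ref{thm-apxlcc}). That said, your proposal is essentially the \cite{BDWY11} argument, and it also matches the template the present paper uses for its own results (Section~\ref{sec-pfgen}): build a sparse matrix $M$ from the collinear triples so that $MA$ vanishes (or is small), observe that $M^*M$ is diagonal dominant because the triple system is a design, and extract a rank (or singular-value) bound via a Hoffman--Wielandt/trace inequality (Lemma~\ref{lem-pert} here). So both in spirit and in mechanics your outline is the right one.

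Two small remarks on where your sketch is vaguest. First, the ``combinatorial preprocessing'' you flag is genuinely needed and is handled in \cite{BDWY11} by a matching/Latin-square argument that spreads the witnesses $k(i,j)$ so that no fixed pair is overused; without it the column-intersection bound can fail. Second, the specific constant $13$ comes from the particular trace-squared inequality in \cite{BDWY11}; the Hoffman--Wielandt route used in this paper (Lemma~\ref{lem-pert}) gives a bound of the same shape but with different constants, and indeed the paper notes that \cite{DSW12} later improved the dependence to $O(1/\delta)$. So your ``main obstacle'' paragraph is accurate: the quantitative constant depends on exactly which spectral inequality you invoke and how carefully the design parameters are balanced.
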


A more recent work \cite{DSW12} improves the dimension upper bound obtained in the above theorem from  $O(1/\delta^2)$ to the asymptotically tight $O(1/\delta)$  and also gives a new proof of Kelly's theorem (when $\delta=1$ one gets an upper bound of 2 on the dimension). 

In this work we consider configurations of points in which there are many triples that are `almost' collinear, in the sense that there is a line close to all three points (in the usual Euclidean metric on $\C^d$). Equivalently, the points are contained in a narrow tube. Our goal is to prove stable analogs of the above theorems, where stable means that the conclusion of the theorem will not change significantly when perturbing the point set slightly. Clearly, in such settings one can only hope to prove that there is a low dimensional subspace that {\em approximates} the set of points. There are many technical issues to discuss when defining approximate collinearity and there are some non trivial examples showing that word-to-word generalizations of the above theorems do not hold in the approximate-collinearity setting (at least for some of the possible definitions). Nonetheless, we are able to prove several theorems of this flavor for configurations of points satisfying certain `niceness' conditions. We also study stable variants of error correcting codes (over the reals) which are locally correctable, in which such approximately collinear tuples of points naturally arise from the correcting procedure. 

In \cite{BDWY11}, a connection was made between the Sylvester-Gallai theorem to a special kind of error correcting codes called Locally Correctable Codes (LCCs). In these codes, a receiver of a corrupted codeword can recover a single symbol of the codeword correctly, making only a small number of queries to the corrupted word.  When studying linear LCCs over the real or complex numbers one encounters the same type of difficulties in trying to convert local dependencies into global dimension bounds. Building on this connection, and our ability to analyze `approximate' linear dependencies, we define the notion of {\em stable} LCC and show that these do not exist for constant query complexity. Stable LCCs correspond to configurations of points with many approximately dependent small subsets and so our techniques can be used to analyze them.

We note here that understanding the possible intersection structure of tubes in high dimensional real space comes up in connection to other geometric problems, most notably the Euclidean Kakeya problem \cite{Tao01} (we do not, however, see a direct connection between our results and this difficult problem).

Our proof techniques extend those of \cite{BDWY11,DSW12} and rely on high rank properties of sparse matrices whose support is a `design'. In this work we go a step further and, instead of relying on rank alone, we need to bound the number of small singular values of such matrices.

\paragraph{Organization:} In Section~\ref{sec-statement} we formally state our results for point configurations. The results are stated in several sub-sections, corresponding to different variants of the problem we consider. In Section~\ref{sec-resultslcc} we define stable LCCs and state our results in this scenario. The proofs are given in Sections~\ref{sec-pfgen} -- \ref{sec-apxlcc}. 

\paragraph{Notations:}  We use big `O' notation to suppress absolute constants only.  For two complex vectors $u,v \in \C^d$ we denote their inner product by $\ip{u}{v} = \sum_{i=1}^d u_i \cdot \overline{v_i}$ and use $\|v\| = \sqrt{\ip{v}{v}}$ to denote the $\ell_2$ norm. For an $m \times n $ matrix $A$, we denote by $\|A\|$ the norm of $A$ as a vector of length $mn$ (i.e., the Forbenius norm).  The {\em distance} between two points $u,v \in \C^d$ is defined to be $\|u-v\|$ and is  denoted $\dist(u,v)$. For a set $S \subset \C^d$ and a point $v \in \C^d$ we denote $\dist(v,S) = \inf_{u \in S} \dist(u,v)$. We let $S^d \subset \C^{d+1}$ denote the $d$-dimensional unit sphere in complex $d+1$ dimensional space. By fixing a basis we can identify each $v \in S^d$ with a $d+1$ length complex vector of $\ell_2$-norm equal to one. 

\section{Point configurations}\label{sec-statement}

In this section we state our results concerning point configurations. The first section, Section~\ref{sec-resultsaffine} deals with the most natural setting -- the affine setting -- in which we consider sets of points in $\C^d$ with many almost-collinear triples. In Section~\ref{sec-resultsproj} we consider the projective setting where the points are located on the sphere and collinearity is replaced with linear dependence. Section~\ref{sec-resultsgen} states a more general theorem from which both the affine and the projective results follow.

\subsection{The affine setting}\label{sec-resultsaffine}

We begin with the  definition of an $\eps$-line.

\begin{define}[$\line$,$\line_\eps$]\label{def-line}
Let $u \neq v \in \C^d$. We define $\line(u,v) = \{ \alpha u + (1-\alpha) v \,|\, \alpha \in \C\}$ to be the complex line passing through $u,v$. We define $\line_\eps(u,v) = \{ w\in \C^d \,|\, \dist(w,\line(u,v)) \leq \eps \}$.
\end{define}

The following definition will be used to replace the notion of dimension with a more stable definition.

\begin{define}[\bf $dim_\eps$]
 For a set of points $V \subset \C^d$ and $\eps>0$ we denote by $\dim_\eps(V)$ to be the minimal $k$ such that there exists a $k$-dimensional subspace\footnote{The difference of 1 between affine and linear dimension will not be significant in this paper and so we use a linear subspace in the definition.} $L \subset \C^d$ such that $\dist(v,L) \leq \eps$ for all $v \in V$.
\end{define}

To give an idea of the subtleties that arise when dealing with approximate collinearity, take an orthonormal basis $e_1,\ldots,e_d$ in $\C^d$  and consider the set $V = \{e_1,e_1',\ldots,e_d,e_d'\}$ with $e_i' = (1+\eps)e_i$. Clearly, there is no low dimensional subspace that approximates this set of points, even though there are many pairs for which there is a third $\eps$-collinear point ($e_i'$ is $\eps$-close to the line passing through $e_i$ and any other third point). An obvious solution to this problem is to require that the minimal distance between each pair of points is bounded from below (say by 1), so that the condition of $\eps$-collinearity is meaningful. We now describe another, less trivial, example which shows that this condition alone is not sufficient in general.

\begin{example}\label{ex-affine}  Let $e_1,\ldots,e_d$ be an orthonormal basis in $\C^d$. Let $v_i = Be_i$, $u_i = (B-1)e_i$  for all $i \in [d]$ and let $V = \{e_i,u_i,v_i \,|\, i \in [d] \}$ be a set of $n = 3d$ points. Then for all $i,j \in [d]$ we have $u_i \in \line_\eps(v_i,e_j)$ and $v_i \in \line_\eps(u_i,e_j)$ with $\eps = 1/B$. Thus, there are many $\eps$-collinear triples in $V$ (as in the conditions of Theorem~\ref{thm-apxsgaffine} with $\delta = 1/3$). However, for any  subspace $L$ of dimension $o(n)$, the distance of at least one of the point $v_i$ to $L$ must be at least $\Omega(B)$ (this can be shown, e.g., using Lemma~\ref{lem-pert}). 
\end{example}

In this example, we had $\eps= 1/B$, where $B$ is roughly equal to the ratio between the smallest and the largest distance, or the `aspect ratio' of $V$. We will  prevent this scenario by requiring that $\eps$ will be sufficiently smaller than $1/B$, where $B$ will be the aspect ratio. This motivates the following definition.

\begin{define}[$B$-balanced]\label{def-balanced}
A set $V \subset \C^d$ is said to be $B$-balanced if  $1 \leq \dist(v,v') \leq B$ for all $v \neq v' \in V$.
\end{define}

The following theorem gives the most easy to state version of our results.

\begin{THM}\label{thm-apxsgaffinesimple}
Let $n, d > 0$ be integers and let $B, \eps > 0$ be real numbers with 	$ \eps < 1/16B$. Let $V = \{v_1,\ldots,v_n\} \subset \C^d$ be $B$-balanced and suppose that for every $i \neq j \in [n]$ there exists $k \in [n]\setminus \{i,j\}$ such that $v_k \in \line_\eps(v_i,v_j)$. Then, $\dim_{\eps'}(V) \leq O(B^{6})$  with $\eps' \leq O( \eps B^{2.5})$.	
\end{THM}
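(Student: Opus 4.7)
My plan is to recast the geometric hypothesis as an approximate linear-dependency system and then control the resulting sparse coefficient matrix spectrally, executing the rank arguments of \cite{BDWY11, DSW12} while tracking small singular values rather than exact rank, as the introduction promises.

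For each ordered pair $i \neq j$, the hypothesis gives $k = k(i,j)$ and $\lambda_{ij} \in \C$ with $\|v_k - \lambda_{ij} v_i - (1-\lambda_{ij}) v_j\| \leq \eps$. The $B$-balanced assumption $\|v_i-v_j\| \geq 1$ together with $\|v_k-v_i\|,\|v_k-v_j\| \leq B$ forces $|\lambda_{ij}|, |1-\lambda_{ij}| \leq B+\eps = O(B)$ (from $|\lambda_{ij}|\cdot\|v_i-v_j\| \leq \|v_k-v_j\|+\eps$ and symmetrically). Homogenizing via the affine lift $\tilde v_i := (v_i, 1) \in \C^{d+1}$ turns each triple into an approximate linear dependency
\[
\lambda_{ij}\,\tilde v_i + (1-\lambda_{ij})\,\tilde v_j - \tilde v_{k(i,j)} = (e_{ij},0), \qquad \|e_{ij}\| \leq \eps.
\]
I would then stack these dependencies as rows of an $N \times n$ matrix $M$ with $N = \binom{n}{2}$, putting entries $\lambda_{ij},\, 1-\lambda_{ij},\, -1$ in columns $i, j, k(i,j)$. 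Letting $\tilde V$ denote the $n \times (d+1)$ matrix with rows $\tilde v_i$, we have $\|M\tilde V\|_F \leq \sqrt{N}\,\eps$. By construction $M$ has three non-zero entries per row, all of magnitude $O(B)$, and satisfies a combinatorial ``design'' property: every column is touched by many rows, and every pair of columns is covered together by many rows.

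The core technical step is a spectral refinement of the design-matrix rank bounds from \cite{BDWY11,DSW12}: for an appropriate threshold $\tau$, at most $r = O(B^6)$ singular values of $M$ lie below $\tau$. One would argue by contradiction --- a subspace $W \subseteq \C^n$ of dimension greater than $r$ on which $M$ has operator norm at most $\tau$ would, via the design structure, force a large cancellation among the $O(B)$-bounded coefficients, violating a quantitative analogue of the coefficient-support relations used in the exact proofs. Once this spectral bound is in hand, let $P$ be the orthogonal projection in $\C^n$ onto the span of right singular vectors of $M$ with singular value at least $\tau$, so $\rank(I-P) \leq r$. From $\tau\|P\tilde V\|_F \leq \|M\tilde V\|_F \leq \sqrt{N}\,\eps$ each row of $P\tilde V$ has norm at most $\sqrt{N}\,\eps/\tau$, so each $\tilde v_i$ lies within that distance of the row space of $(I-P)\tilde V$, a subspace of $\C^{d+1}$ of dimension at most $r$. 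Projecting onto the first $d$ coordinates gives an $r$-dimensional subspace of $\C^d$ approximating $V$; tuning $\tau$ to balance the two inequalities produces the claimed bounds $O(B^6)$ on dimension and $O(\eps B^{5/2})$ on the per-point error.

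The principal obstacle is the spectral bound in the middle paragraph. The existing design-rank proofs rely on a strict algebraic dichotomy --- a coefficient is either zero or non-zero --- which does not survive perturbation to approximate relations; a new $\ell_2$-stable analysis of design matrices must be developed, and obtaining the correct polynomial dependence on $B$ (both in the dimension bound $B^6$ and in the error exponent $5/2$) is what makes the parameter bookkeeping delicate.
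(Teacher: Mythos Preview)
Your architecture matches the paper's --- build a sparse matrix $M$ from the approximate dependencies, control its small singular values, and project --- but three of the steps, as you have them, do not go through.

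\textbf{The design property is stated backwards.} For the spectral bound one needs that every pair of columns is supported together in \emph{few} rows (at most some $g$), not many; this is what keeps the off-diagonal entries of $M^*M$ small. Obtaining $g = O(B)$ requires two ingredients you omit: a reversal lemma (Lemma~\ref{lem-arcreverseaffine}) converting any of the three possible $\eps$-collinearity statements on $\{v_i,v_j,v_k\}$ into $v_k \in \line_{O(B\eps)}(v_i,v_j)$, and a tube-density lemma (Lemma~\ref{lem-arcdensityaffine}) showing that at most $O(B)$ points of a $B$-balanced set lie in any fixed $\line_{\eps'}$-tube.

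\textbf{You have the wrong coefficient bound.} For the diagonal of $M^*M$ to be large you need \emph{lower} bounds $|\lambda_{ij}|,\,|1-\lambda_{ij}| \geq \Omega(1/B)$; you only derived upper bounds. (The lower bound is equally easy: $1 \leq \|v_k-v_j\| \leq |\lambda_{ij}|\cdot\|v_i-v_j\| + \eps \leq |\lambda_{ij}|B + \eps$.) After rescaling so that all nonzero entries of $M$ lie in $[\mu,1]$ with $\mu = \Omega(1/B)$, one has diagonal entries of $X=M^*M$ at least $p\mu^2$ and off-diagonals at most $g$, with $p = \Theta(n)$ and $g = O(B)$. The spectral bound then follows in one line from the Hoffman--Wielandt inequality (Lemma~\ref{lem-pert}): at most $2n^2g^2/(p\mu^2)^2 = O(B^6)$ eigenvalues of $X$ lie below $p\mu^2/4$. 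No ``new $\ell_2$-stable analysis of design matrices'' is needed; Hoffman--Wielandt is already the robust statement you are looking for.

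\textbf{The Frobenius-to-pointwise step is invalid.} The bound $\|P\tilde V\|_F \leq \sqrt{N}\,\eps/\tau$ controls only the \emph{sum} of squared row norms; the trivial per-row consequence carries a factor $\sqrt{N}/\tau = \Theta(\sqrt{n}\,B)$ that no choice of $\tau$ can remove while keeping the dimension bound independent of $n$. The paper instead applies Markov's inequality to place all but fewer than $p/g$ of the points within distance $O(\eps\sqrt{g|T|}/(p\mu))$ of the subspace $L$, and then interpolates: each remaining $v_i$ lies in an $(\eps,\mu)$-dependent triple whose other two members are already close to $L$, hence is itself close (Claim~\ref{cla-therest}, Lemma~\ref{lem-trianglesubsp}). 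This interpolation is what makes the final $\eps'$ independent of $n$.
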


Observe that a corollary of this theorem is that the number of points, $n$, is bounded from above by a function of $B$. A priori, we did not have this bound since a $B$-balanced configuration in $\C^d$ can have an unbounded number of points when $d$ grows.

Notice that our definition of $\eps$-collinearity is not symmetric in that it depends on the order of the triple. As is shown in Lemma~\ref{lem-arcreverseaffine}, this is not an issue for $B$-balanced configurations, as long as we are willing to replace $\eps$ with $\eps B$. For general (i.e., non balanced) configurations the situation can be more complicated and it is possible that using a stronger collinearity condition (e.g., requiring that any permutation of the triple satisfies our condition) is sufficient for obtaining a global dimension bound. 

Theorem~\ref{thm-apxsgaffinesimple} will be a special case of the following, more general theorem, in which we only have information of a subset of the pairs $(i,j)$. Assuming $V$ has many $\eps$-collinear triples (for each point), we derive an upper bound on  $\dim_{\eps'}(V)$ for $\eps'$ which depends on the other parameters. We also derive a  better bound on $\eps'$ when restricting to a subset of the points. 

\begin{THM}\label{thm-apxsgaffine}
Let $n, d > 0$ be integers.  Let $B, \delta, \eps > 0$ be real numbers with 	$ \eps < 1/16B$. Let $V = \{v_1,\ldots,v_n\} \subset \C^d$ be $B$-balanced and suppose that for every $i\in [n]$ there are at least $\delta(n-1)$ values of $j \in [n]\setminus \{i\}$ for which there exists $k \in [n]\setminus \{i,j\}$ such that $v_k \in \line_\eps(v_i,v_j)$. Then
\begin{enumerate}
	\item $\dim_{\eps'}(V) \leq O(B^{6}/\delta^2)$  with $\eps' \leq O( \eps B^{2.5}/\delta^{0.5})$.
	\item There exists a subset $V' \subset V$ of size $\Omega(n)$ with $\dim_{\eps''}(V') \leq O(B^{6}/\delta^2)$ and $\eps'' \leq O(B\eps)$.
\end{enumerate}
\end{THM}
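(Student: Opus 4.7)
\textbf{Proof plan for Theorem~\ref{thm-apxsgaffine}.}

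The plan is to reduce the affine statement to the projective/spherical general theorem of Section~\ref{sec-resultsgen} via homogenization, and to do the main linear-algebraic work on the sphere. After translating so that $v_1=0$, the $B$-balanced hypothesis yields $\|v_i\|\le B$; I lift to $\hat v_i=(v_i,1)\in \C^{d+1}$ and normalize to $\tilde v_i=\hat v_i/\|\hat v_i\|\in S^d$, with $\|\hat v_i\|\in[1,O(B)]$. Under this lifting, $v_k\in \line_\eps(v_i,v_j)$ translates to the existence of scalars $(\alpha,\beta,\gamma)$, one of modulus $1$ and the others of modulus $O(B)$, with $\|\alpha \tilde v_i+\beta \tilde v_j+\gamma \tilde v_k\|\le \eps_1 = O(B\eps)$. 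The hypothesis $\eps<1/(16B)$ guarantees that the three lifted points remain pairwise separated on $S^d$ (this is the quantitative role played by the forthcoming Lemma~\ref{lem-arcreverseaffine}), which is essential for the spectral estimates to follow.

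With the spherical $\eps_1$-dependencies in hand, I would build an $m\times n$ approximate design matrix $M$ in the sense of \cite{BDWY11,DSW12}: rows are indexed by the chosen approximate triples, each row has three nonzero entries (the coefficients $\alpha,\beta,\gamma$ at the three corresponding columns) so that every row $r$ satisfies $\|(M\tilde V)_{r,:}\|\le \eps_1$, where $\tilde V$ is the $n\times(d+1)$ matrix with rows $\tilde v_i^\top$. The density hypothesis gives $\Omega(\delta n)$ rows in which column $i$ is nonzero, and the column supports satisfy a design-type overlap condition whose parameters are controlled by the $B$-balanced assumption. The crux of the proof, and the principal technical obstacle, is a quantitative spectral lemma for such matrices: at least $n-O(B^6/\delta^2)$ of the squared singular values of $M$ exceed a threshold $\sigma^2=\Omega(\delta n / B^{O(1)})$. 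In the exact case this reduces to the rank lower bound of \cite{BDWY11,DSW12}; the approximate version requires combining the design/matrix-scaling argument with a perturbation bound, which is exactly the step from rank to small-singular-value count that the introduction advertises.

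Given the spectral bound, decompose $M=U\Sigma W^*$ and partition $W=[W_{\mathrm{big}}\mid W_{\mathrm{small}}]$ according to whether the corresponding singular value of $M$ exceeds $\sigma$. From $\|\Sigma W^*\tilde V\|_F=\|M\tilde V\|_F=O(\eps_1\sqrt{m})$ one obtains $\|W_{\mathrm{big}}^*\tilde V\|_F\le O(\eps_1\sqrt m/\sigma)$, so the rows of $\tilde V$ are approximated in Frobenius norm by the row-span of the $r\times(d+1)$ matrix $W_{\mathrm{small}}^*\tilde V$, an $r$-dimensional subspace $\tilde L\subset \C^{d+1}$ with $r=O(B^6/\delta^2)$. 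Intersecting $\tilde L$ with the affine hyperplane $\{x_{d+1}=1\}$ and rescaling by $\|\hat v_i\|=O(B)$ produces the affine subspace required in Part 1; combining the unlifting factor $O(B)$ with the conversion of the Frobenius bound to a worst-case per-row bound via the $\sqrt n$ loss, and using the $B$-balanced upper bound on $n$, yields the claimed $\eps'=O(\eps B^{5/2}/\sqrt\delta)$.

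Part 2 follows from averaging. The same Frobenius bound says the \emph{average} squared per-point distance from $\tilde v_i$ to $\tilde L$ is $O((B\eps)^2)$, so by Markov's inequality a $(1-o(1))$-fraction of the $\tilde v_i$ lie within $O(B\eps)$ of $\tilde L$; unlifting this subset yields $V'\subset V$ of size $\Omega(n)$ with $\dim_{\eps''}(V')\le O(B^6/\delta^2)$ and $\eps''=O(B\eps)$. The main difficulty throughout is the quantitative spectral lemma for $M$: a pure rank bound would allow many arbitrarily small singular values that swamp the approximate dependencies, so one must genuinely strengthen the design-rank machinery of \cite{BDWY11,DSW12} to a small-singular-value count, and this is where the bulk of the work lies and where all the $B,\delta$ dependence in $\eps'$ and $\eps''$ is determined.
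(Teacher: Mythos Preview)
Your plan is broadly on the right track in that it builds an approximate design matrix $M$ from the collinear triples and then controls the number of small singular values of $M$; this is indeed the engine of the proof. Two points deserve comment, one a simplification and one a genuine gap.

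\textbf{The lifting is a detour.} The paper does \emph{not} homogenize and pass to the sphere for the affine theorem. It works directly in $\C^d$: the key observation is that for a $B$-balanced triple $\{u,v,w\}$ with $w\in\line_\eps(u,v)$ and $\eps<1/2$, the affine relation $w=\alpha u+(1-\alpha)v+e$ with $\|e\|\le\eps$ automatically has $|\alpha|,|1-\alpha|\ge 1/(4B)$ (Lemma~\ref{lem-triplecoefaffine}), so the triple is already $(\eps,1/(4B))$-dependent in the sense of Section~\ref{sec-resultsgen}. One then feeds the set of such triples as a $(p,g)$-design with $p=\delta(n-1)$ and $g<5B$ (the overlap bound via Lemmas~\ref{lem-arcreverseaffine} and~\ref{lem-arcdensityaffine}) straight into Theorem~\ref{thm-apxsggen}/\ref{thm-apxsggenav}. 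Your lifted approach can be made to work, but it introduces extra norm factors you then have to track through the unlifting, for no gain.

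\textbf{The real gap: from Frobenius to worst case.} Your passage from the Frobenius bound $\|W_{\mathrm{big}}^*\tilde V\|_F=O(\eps_1\sqrt m/\sigma)$ to a per-point bound invokes ``the $\sqrt n$ loss'' and then ``the $B$-balanced upper bound on $n$''. This is circular: a $B$-balanced set in $\C^d$ with $d$ unbounded can have arbitrarily many points (the paper says exactly this right after Theorem~\ref{thm-apxsgaffinesimple}); the finiteness of $n$ is a \emph{consequence} of the theorem, not an input. With a naive worst-case extraction you would get $\eps'$ growing like $\sqrt n$, which is useless. The paper avoids this by an interpolation step: from the Frobenius bound one obtains $\sum_i \dist(v_i,L)^2\le 4|T|\eps^2/(\mu^2 p)$, sets a threshold so that by Markov the set $I$ of ``far'' points has $|I|<p/g$, and then uses the design structure to argue that every $i\in I$ belongs to some triple $\{i,j,k\}\in T$ with $j,k\notin I$ (otherwise pigeonhole would produce a pair in more than $g$ triples). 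The $(\eps,\mu)$-dependence of that triple then pulls $v_i$ close to $L$ as well. This is precisely where the $n$'s cancel and one gets $\eps'=O(\eps B^{2.5}/\sqrt\delta)$ with no dependence on $n$. Your Part~2 is fine as stated (Markov alone suffices there), but for Part~1 you need this interpolation argument, not a bound on $n$.
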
 

In both of the above theorems, the parameter $B$  appears in the resulting global dimension bound. We suspect that this dependence can be removed so that the bound on the dimension will be $O(1)$ in Theorem~\ref{thm-apxsgaffinesimple} and $O(1/\delta^2)$ (or even $O(1/\delta)$) in Theorem~\ref{thm-apxsgaffine}. The blowup in $\eps'$, compared to $\eps$ is also likely to be suboptimal.


A stronger definition of collinearity, for which Example~\ref{ex-affine} fails, is to require that each point in the triple is $\eps$-close to the line spanned by the other two points. Let us call such triples {\em strongly} $\eps$-collinear triples. It is easy to see that, in Example~\ref{ex-affine}, the triples do not satisfy this stronger definition. Thus, it is possible that one could prove analogs of Theorem~\ref{thm-apxsgaffine} for configurations that are not $B$-balanced using this stronger definition of approximate collinearity. 

We conclude this discussion with yet another example showing that, even for the case $\delta=1$ (i.e, the original Sylvester-Gallai condition) the weak definition of $\eps$-collinearity requires some balancedness condition (though potentially weaker).

\begin{example}
Fix some large $B > 0$. Take an orthonormal basis $e_1,\ldots,e_d \in \C^d$ and define $V = \{0\} \cup \bigcup_{i\in [d]}\left\{ B^{i-1}e_i, (B^{i-1}+1)e_i \right\}$. One can verify by induction that for every $u,v \in V$ there is a third point inside $\line_{\eps}(u,v)$ with $\eps \approx 1/B$.  There is also no low dimensional subspace that approximates $V$ (similar to the previous examples). 
\end{example}

\subsection{The projective setting}\label{sec-resultsproj}

Since the definition of $\eps$-collinearity (that is, $v_k \in \line_\eps(v_i,v_j)$) is sensitive to scaling, a projective statement of Theorem~\ref{thm-apxsgaffine}, in which these scaling issues do not arise, seems natural. In this setting  we consider points on a sphere and lines are replaced by circles (two dimensional subspaces intersected with $S^d$).

\begin{define}[$\arc$,$\arc_\eps$]\label{def-arc}
Let $u,v \in S^d$. We define $\arc(u,v) = \span\{u,v\} \cap S^d$. We define $\arc_\eps(u,v) = \{ w\in S^d \,|\, \dist(w,\arc(u,v)) \leq \eps \}$.
\end{define}

An instructive example  in the projective case is the following:

\begin{example}\label{ex-proj}
Take $V$ to be a maximal set in $S^d$ with pairwise distances at least $\mu >0$ (so that $n \approx (c/\mu)^{d}$ with $c$ a constant). Since every point in $S^d$ is of distance at most $\mu$ from one of the points in $V$ (otherwise we could add it) we get that each set $\arc_\mu(v_i,v_j)$ contains at least $\Omega(1/\mu) > 2$ points from $V$. On the other hand, for any low dimensional subspace $L$ (say, with dimension $d'$ independent of $n$) almost all points in $V$ will have distance at least $1/100$ from $L$.
\end{example}

From this example we see that there needs to be some upper bound on $\eps$ as a function of the minimal distance in the set. We will use the following definition to replace $B$-balancedness.

\begin{define}[$\mu$-separated]\label{def-sep}
A set $V \subset S^d$ is said to be $\mu$-separated if for every $u \neq v \in V$ we have $\min\{ \dist(u,v), \dist(u,-v)\} \geq \mu$. 
\end{define}

We now state our theorem for points on a sphere. 

\begin{THM}\label{thm-apxsg}
Let $n, d > 0$ be integers and let $\delta,\mu,\eps > 0$ be real numbers with $ \eps < \mu^2/32$. Let $V = \{v_1,\ldots,v_n\} \subset S^d$ be $\mu$-separated and suppose that for every $i\in [n]$ there are at least $\delta(n-1)$ values of $j \in [n]\setminus \{i\}$ for which there exists $k \in [n]\setminus \{i,j\}$ such that $v_k \in \arc_\eps(v_i,v_j)$. Then
\begin{enumerate}
	\item $\dim_{\eps'}(V) \leq O(1/\delta^2\mu^6)$ with $\eps' \leq O( \eps/\delta^{0.5}\mu^{2.5})$.
	\item There exists a subset $V' \subset V$ of size $\Omega(n)$ with $\dim_{\eps''}(V') \leq O(1/\delta^2\mu^6)$ and $\eps'' \leq O(\eps/\mu)$.
\end{enumerate}
\end{THM}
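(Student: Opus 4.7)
I will follow the design-matrix strategy of \cite{BDWY11,DSW12}, with the rank lower bound used there replaced by a quantitative count of \emph{small} singular values, as foreshadowed in the introduction. (One can also anticipate this to be stated and proved as a special case of the more general result promised in Section~\ref{sec-resultsgen}; below I outline the direct argument.) The four steps are: (a) encode the $\eps$-collinearity data as a matrix $M$; (b) show $M$ has few small singular values via its design structure; (c) transfer this into an SVD tail bound on the point matrix $P$; and (d) convert the tail bound into Parts~(1) and~(2).

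For each $i\in[n]$, fix a set $J_i\subseteq[n]\setminus\{i\}$ of size $\geq\delta(n-1)$ of partners $j$ with a witness $k(i,j)$ satisfying $v_{k(i,j)}\in\arc_\eps(v_i,v_j)$. For each such triple I will obtain coefficients $\alpha,\beta\in\C$ with $\|v_{k(i,j)}-\alpha v_i-\beta v_j\|\leq O(\eps)$ by solving the $2\times2$ Gram system for $v_i,v_j$ in least-squares form, and show $|\alpha|,|\beta|=O(1/\mu^2)$. The latter bound uses $\mu$-separation to bound the determinant $1-|\ip{v_i}{v_j}|^2$ away from zero, possibly after a reverse-direction adjustment in the spirit of Lemma~\ref{lem-arcreverseaffine}, or a minimum-norm selection when $v_i,v_j$ happen to be (approximately) dependent. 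Assembling the triples into $M\in\C^{T\times n}$ whose row for $(i,j,k(i,j))$ has entries $1,-\alpha,-\beta$ in columns $k(i,j),i,j$, and letting $P\in\C^{n\times(d+1)}$ be the matrix whose $i$-th row is $v_i$, I immediately get $\|MP\|_F^2\leq T\cdot O(\eps^2)=O(\delta n^2\eps^2)$.

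The main obstacle is the stable design-matrix estimate: each column of $M$ has $\Omega(\delta n)$ nonzeros of magnitude $O(1/\mu^2)$ and each pair of columns overlaps in a controlled number of rows, so the ``matrix-scaling plus Gershgorin'' machinery of \cite{DSW12} should upgrade from rank lower bounds to counting how many eigenvalues of $M^*M$ lie below a positive threshold. I expect this to yield at most $r=O(1/\delta^2\mu^6)$ singular values of $M$ below a threshold $\tau$ of order $\sqrt{\delta n}/\mu^2$; this quantitative refinement is the genuinely new technical step. Granted it, the identity $\|MP\|_F^2=\sum_j\sigma_j(P)^2\|Mu_j\|^2$ combined with the trace inequality $\trace(PP^*\pi_W)\leq\sum_{j=1}^r\sigma_j(P)^2$ (for $\pi_W$ the projection onto the small-singular-value subspace of $M$) delivers the tail bound $\sum_{j>r}\sigma_j(P)^2\leq\|MP\|_F^2/\tau^2=O(n\mu^4\eps^2)$. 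Part~(2) then follows from a Markov-style argument on the per-point squared distances $\|v_i-\pi_L v_i\|^2$, where $L$ is the top-$r$ right-singular subspace of $P$, yielding a subset $V'\subseteq V$ of size $\Omega(n)$ on which every point is within $O(\eps/\mu)$ of $L$. For Part~(1) I bootstrap: any ``bad'' $v_i\in V\setminus V'$ still has many $\eps$-collinear triples with ``good'' points $v_j,v_k\in V'$ that are already close to $L$, and the bounded-coefficient relation $v_k\approx\alpha v_i+\beta v_j$ then forces $v_i$ itself into an $O(\eps/\sqrt\delta\,\mu^{2.5})$-neighborhood of $L$.
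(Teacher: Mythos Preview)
Your plan is essentially the paper's proof: build a design matrix from the approximate dependencies, count its small singular values via diagonal dominance of $M^*M$, convert to a tail bound on the point matrix, then Markov-plus-bootstrap. A few places in your sketch would not go through as written and are handled differently in the paper:

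\textbf{Coefficient normalization.} Solving the Gram system only gives you \emph{upper} bounds $|\alpha|,|\beta|=O(1/\mu^2)$; diagonal dominance needs two-sided control. The paper instead proves (its Lemma~6.3) that for any $\mu$-separated triple with $w\in\arc_\eps(u,v)$ one can choose coefficients with $\mu/8\le |\alpha|,|\beta|,|\gamma|\le 1$: start from $w-\alpha'u-\beta'v$ and, if some coefficient exceeds $1$, divide through by the largest one; the lower bound comes from separation (their Lemma~6.2). With this normalization the diagonal of $M^*M$ is $\ge p\,(\mu/8)^2=\Omega(\delta n\mu^2)$, so the threshold is $\tau\asymp \mu\sqrt{\delta n}$, not $\sqrt{\delta n}/\mu^2$ as you wrote. (Your stated tail bound $O(n\mu^4\eps^2)$ would give $\eps''=O(\mu^2\eps)$, which is too good; with the correct $\tau$ you recover $O(n\eps^2/\mu^2)$ and $\eps''=O(\eps/\mu)$.)

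\textbf{The column-overlap bound.} You assert each pair of columns overlaps in a ``controlled'' number of rows; this is the $g$-parameter and is the one place the hypothesis $\eps<\mu^2/32$ is used. The paper proves $g\le 8/\mu$ by first showing (arc-reverse, Lemma~6.4) that any triple puts $v_k$ in $\arc_{8\eps/\mu}(v_i,v_j)$, and then (arc-density, Lemma~6.5) that at most $8/\mu$ points of a $\mu$-separated set can lie in a fixed $\arc_{\eps'}$ when $\eps'<\mu/4$. You should spell this out; without it the off-diagonal of $M^*M$ is not controlled. Incidentally, no matrix scaling is needed: the paper just applies Hoffman--Wielandt to $M^*M$ (their Lemma~4.3).

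\textbf{Bootstrap for Part~(1).} Taking $V'$ merely of size $\Omega(n)$ is not enough to guarantee each bad $v_i$ has a triple with both partners in $V'$. The paper instead thresholds so that the bad set $I$ has $|I|<p/g$; then a pigeonhole on the $\ge p$ triples through $i$, each sharing at most $g$ with any other bad index, yields one triple entirely in the good set. Choosing the threshold this way is exactly what produces the $\eps'=O(\eps/\sqrt{\delta}\,\mu^{2.5})$.

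One genuine variant in your outline is the SVD step: using $\sum_{j>r}\sigma_j(P)^2\le \|MP\|_F^2/\tau^2$ (via $M^*M\succeq \tau^2\pi_{W^\perp}$ and Ky Fan) goes straight to the row-distance bound, whereas the paper first bounds column distances and then transfers to rows by an auxiliary rank-$r$ matrix (their Claim~4.6). Your route is slightly cleaner here.
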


Notice that, when compared with Theorem~\ref{thm-apxsg}, the parameters $\mu$ corresponds to $1/B$. However, the condition on $\eps < \mu^2/32$ is more restrictive in this case. We do not know whether this condition can be improved to $\eps  \leq O(\mu)$. 
As is the case with Theorem~\ref{thm-apxsg}, we do not expect the dependency in the dimension bound and in $\eps'$ to be tight.

\subsection{The general statement}\label{sec-resultsgen}

Both  Theorem~\ref{thm-apxsgaffine} and Theorem~\ref{thm-apxsg} will follow from a more general statement requiring a set of points with a family of $\eps$-dependent triples satisfying certain  conditions. 

\begin{define}[$(\eps,\mu)$-dependent]
We say that a triple of points $u,v,w \in \C^d$ is $(\eps,\mu)$-dependent if there exists complex numbers $\alpha,\beta,\gamma$ with $|\alpha|,|\beta|,|\gamma| \in [\mu,1]$ such that $$\| \alpha u + \beta v + \gamma w \| \leq \eps.$$
\end{define}

\begin{define}[$(p,g)$-design]
Let $T \subset {[n] \choose 3}$ be a family of triples in $[n]$. We say that $T$ is a $(p,g)$-design if 
\begin{enumerate}
	\item For all $i \in [n]$ there are at least $p$ triples in $T$ that contain $i$.
	\item For all $i \neq j \in [n]$ there are at most $g$ triples in $T$
 containing both $i$ and $j$.
\end{enumerate}
\end{define}

The following theorem gives a low dimensional subspace that approximates all points in a configuration in which there is a design of triples that are $(\eps,\mu)$-dependent. Below we will also prove a slightly more refined statement (see Theorem~\ref{thm-apxsggenav}) giving better distance from $L$ for {\em many} points in the configuration.

\begin{THM}\label{thm-apxsggen}
Let $n, d > 0$ be integers and $p,g,\delta,\mu,\eps > 0$ be real numbers. Let $V = \{v_1,\ldots,v_n\} \subset \C^d$, $T \subset {[n] \choose 3}$ be such that $T$ is  $(p,g)$-design, and for every  $\{i,j,k\} \in T$ the triple $v_i,v_j,v_k$ is $(\eps,\mu)$-dependent. Then, $$\dim_{\eps'}(V) \leq \frac{2n^2 g^2}{p^2 \mu^4}$$ with $$\eps' \leq \frac{5\eps \sqrt{g|T|}}{p\mu^2}.$$
\end{THM}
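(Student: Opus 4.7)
My plan is to reduce the problem to a quantitative singular value bound for design matrices, following the matrix-theoretic approach of \cite{BDWY11,DSW12} but strengthening it to quantitative (as opposed to purely rank-theoretic) statements.

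First I would set up the dependency matrix. For each triple $\{i,j,k\} \in T$ we get scalars $\alpha,\beta,\gamma$ with $|\alpha|,|\beta|,|\gamma|\in[\mu,1]$ such that $\|\alpha v_i+\beta v_j+\gamma v_k\|\le\eps$. I form the $|T|\times n$ matrix $A$ whose row indexed by $\{i,j,k\}$ has $\alpha,\beta,\gamma$ in positions $i,j,k$ (and zeros elsewhere), and I collect the points into the $n\times d$ matrix $V$ with rows $v_1,\ldots,v_n$. By construction the Frobenius norm satisfies
\[
  \|AV\|_F^2 \;=\; \sum_{\{i,j,k\}\in T}\|\alpha v_i+\beta v_j+\gamma v_k\|^2 \;\le\; \eps^2|T|.
\]
By the $(p,g)$-design hypothesis, $A$ is a \emph{design matrix}: each column of $A$ has at least $p$ nonzero entries of modulus in $[\mu,1]$, and any two columns share at most $g$ common nonzero rows.

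Next I would run a singular-value truncation argument. Write the SVD $A=U\Sigma V_A^{*}$ with singular values $\sigma_1\ge\cdots\ge\sigma_n\ge 0$, and set $B=V_A^{*}V$, whose rows I denote $b_1,\ldots,b_n\in\C^d$. Since $U$ is unitary,
\[
  \eps^2|T| \;\ge\; \|AV\|_F^2 \;=\; \|\Sigma B\|_F^2 \;=\; \sum_{j=1}^{n}\sigma_j^2\,\|b_j\|^2.
\]
Fix a threshold $\sigma>0$ to be chosen later, and let $k$ be the number of singular values of $A$ that are $\ge\sigma$. Take $L=\mathrm{span}\{b_{k+1},\ldots,b_n\}\subset\C^d$, a subspace of dimension at most $n-k$. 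Using $V=V_A B$, we have $v_i=\sum_j(V_A)_{ij}b_j$, and two applications of Cauchy--Schwarz (plus unitarity of $V_A$) give
\[
  \dist(v_i,L)^2 \;\le\; \Bigl\|\sum_{j\le k}(V_A)_{ij}b_j\Bigr\|^2 \;\le\; \sum_{j\le k}\|b_j\|^2 \;\le\; \frac{\eps^2|T|}{\sigma^2}.
\]
Thus $\dim_{\eps'}(V)\le n-k$ with $\eps'\le\eps\sqrt{|T|}/\sigma$. Choosing $\sigma\asymp p\mu^2/\sqrt{g}$ matches the claimed bound on $\eps'$ (after accommodating the constant $5$), so the whole theorem reduces to showing that $A$ has at least $n-\tfrac{2n^2g^2}{p^2\mu^4}$ singular values at or above this threshold, equivalently, at most $\tfrac{2n^2g^2}{p^2\mu^4}$ singular values below it.

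The main obstacle, and the step on which the whole argument rests, is precisely this \emph{stable} design-matrix estimate: a quantitative count of small singular values of $A$. The rank lower bounds of \cite{BDWY11,DSW12} are the special case $\sigma=0$; here one needs a robust version. My plan is to study the Gram matrix $M=A^{*}A$, whose diagonal entries are $\ge p\mu^2$ (each column contributes at least $p$ terms of squared modulus $\ge\mu^2$) and whose off-diagonal entries have absolute value at most $g$ (by the design bound on shared rows). Then I would mimic the matrix-scaling / Schur-product arguments of \cite{DSW12} on $M$ to show that the eigenvalues of $M$ below a threshold $\sigma^2\asymp p^2\mu^4/g$ can only contribute in total at most $O(n^2 g^2/(p^2\mu^4))$ to the spectrum; quantitatively, this would be achieved by comparing $\trace(M)$, a Hilbert--Schmidt bound on the off-diagonal mass $\sum_{i\ne j}|M_{ij}|^2\le n^2 g^2$, and a spectral perturbation inequality (Weyl / Wielandt--Hoffman) that trades off-diagonal mass against the number of eigenvalues that can drift below $\sigma^2$. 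Combining this singular-value count with the truncation above yields the claimed bounds on $\dim_{\eps'}(V)$ and $\eps'$.
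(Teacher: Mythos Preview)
Your matrix setup and the Hoffman--Wielandt count of small singular values of $A$ via the Gram matrix $A^*A$ (diagonals $\ge p\mu^2$, off-diagonals $\le g$ in absolute value) match the paper exactly. The gap is in the passage to a \emph{uniform} bound on $\dist(v_i,L)$. Your Cauchy--Schwarz step gives $\dist(v_i,L)\le\eps\sqrt{|T|}/\sigma$, so to hit the stated $\eps'$ you need $\sigma\asymp p\mu^2/\sqrt{g}$, and you then claim the number of singular values below this $\sigma$ is at most $2n^2g^2/(p^2\mu^4)$. That claim is false in general. Take a Steiner triple system on $n$ points with all dependency coefficients equal to $1$ (so $p=(n-1)/2$, $g=1$, $\mu=1$): then $A^*A=(p-1)I+J$ has $n-1$ eigenvalues equal to $(n-3)/2=\Theta(n)$, while your threshold $\sigma^2\asymp p^2=\Theta(n^2)$ sits above them; thus $n-1$ singular values fall below $\sigma$, far more than the target $8n^2/(n-1)^2\approx 8$. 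No argument based only on the diagonal/off-diagonal sizes of $A^*A$ can do better here, since the example saturates those bounds.

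The paper instead uses the lower threshold $\sigma=\mu\sqrt{p}/2$, for which Hoffman--Wielandt does give the dimension bound, and extracts only the $\ell_2$-average estimate $\sum_i\dist(v_i,L)^2\le 4|T|\eps^2/(\mu^2 p)$ (your own construction already yields this: $\sum_i\|\sum_{j\le k}(V_A)_{ij}b_j\|^2=\sum_{j\le k}\|b_j\|^2$ by unitarity of $V_A$, without the lossy per-$i$ Cauchy--Schwarz). The idea your proposal is missing is a combinatorial \emph{interpolation} step that upgrades the average bound to a uniform one: the set $I$ of ``far'' points with $\dist(v_i,L)^2>4g|T|\eps^2/(\mu^2p^2)$ has $|I|<p/g$; the design property then guarantees each $i\in I$ lies in some triple $\{i,j,k\}\in T$ with $j,k\notin I$ (otherwise pigeonhole forces some pair $\{i,j\}\subset I$ into more than $g$ triples), and the $(\eps,\mu)$-dependence of that triple transfers the good bounds on $\dist(v_j,L),\dist(v_k,L)$ to $\dist(v_i,L)$, yielding $\eps'\le 5\eps\sqrt{g|T|}/(p\mu^2)$.
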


A setting of the parameters which will be most relevant to us is when $|T|$ is quadratic in $n$, $p$ is linear in $n$ and $g$ and $\mu$ are constants. In this case we get a constant upper bound on the dimension $\dim_{\eps'}(V)$ with $\eps' = O(\eps)$.
 
The proof of Theorem~\ref{thm-apxsggen} is given in the next section with the proofs of Theorems~\ref{thm-apxsgaffine} and ~\ref{thm-apxsg} in Sections~\ref{sec-pfaffine} and \ref{sec-pfproj} respectively. We give a high level overview of the proof below.

\paragraph{Proof overview:}

We place the points $v_1,\ldots,v_n$ as rows in a matrix $A$. We then  use the triple family $T$ to construct a matrix $M$ such that
\begin{itemize}
	\item $M$ is a $|T| \times n$ matrix whose support is determined by $T$. More precisely, the non zero coordinates of the $t$'th row of $M$, with $t \in T$, will be the three elements in $t$. 
	\item The values of the entries of $M$ will be in absolute value between $\mu$ and $1$.
	\item The product $M\cdot A$ will have small Forbenius norm.
\end{itemize}

We then observe that the matrix $X = M^{*} M$ is diagonal dominant (its diagonal elements are much larger than its off-diagonal elements). This implies, using the Hoffman-Wielandt inequality,  that $M$ has only a few small singular values. From this we get that the columns of $A$ must have small distance (on average) to the span of the small singular vectors of $M$ and so can be approximated well by a low dimensional space. We then show that the same statement holds when one replaces the columns of $A$ with the rows of $A$ (a fact which generalizes the simple fact that the row rank is equal to the column rank). Using the bound on the average distance of rows we argue that there is a large subset that is approximated well by a low dimensional subspace. We then extend this to {\em all} points using interpolation.

\section{Stable Locally Correctable Codes}\label{sec-resultslcc}

 
Before discussing local correction, we briefly mention the exciting recent developments regarding `standard' (non-local) error correcting codes over the reals. Like in the analogous theory over finite fields, one would like to encode (typically via a linear transformation) a vector of entries from a given field $\F$ by a longer one, such that the original message can be decoded even when  some entries of the codeword are corrupted. The breakthrough of `compressed sensing' by Donoho  and Candes-Tao, and subsequent developments (see e.g. \cite{CT05, RV05, Donoho06, KT07, DMT07, GLW09}) has lead to an understanding of codes over the reals that is almost as good as in the finite-field case. In particular, there are real-valued codes which achieve the gold-standard of coding theory of constant rate linear codes with efficient encoding and decoding algorithms from a linear number of errors of arbitrary magnitude. Moreover, these codes have {\em stable} versions which can recover a vector close to the original message even if small errors affect {\em all} coordinates of the encoding. Our local variant may be viewed as one local analog of such stable codes.

Informally, Locally Correctable Codes (LCCs) are error correcting codes that allow the transmission of information over a noisy channel so that the symbols of the transmitted words have many local dependencies between them. The most general definition requires that one can reconstruct (w.h.p) {\em any} coordinate in a possibly corrupted codeword, using a small number of (randomly chosen) queries to the other coordinates. The noise model is adversarial, meaning that the corrupted positions are arbitrary (and not random) and one only has a bound on the total number of errors (which is usually assumed to be a small constant fraction). LCCs are closely related to another type of codes - Locally Decodable Codes (LDCs)-- whose study was initiated in a work of Katz and Trevisan \cite{KTldc}. We refer the interested reader to \cite{Y_now} for the relevant background on LDCs and LCCs and their applications in computer science.
 
The connection between LCCs and the Sylvester-Gallai theorem was first observed in  \cite{BDWY11}. When studying the special case of {\em linear} LCCs (i.e., LCCs that are given by linear mappings over a field) one can easily show that LCCs are equivalent to point configurations with many linearly dependent small subsets. The general definition of linear LCCs is as follows (we fix the field to be $\C$ but the same definition works for any field). We use $w(v)$ to denote the number of non zero elements in a vector $v \in \C^n$.
 

\begin{define}[Linear LCC -- first definition]\label{def-lcc1}
A $(q,\delta)$-LCC over $\C$ is a linear subspace $U \subset \C^m$ such that there exists a randomized
decoding procedure $D : \C^m \times [m] \mapsto \C$ with the following properties:
\begin{enumerate}
\item For all $x \in U$, for all $i \in [m]$ and for all $v \in \C^m$ with $w(v) \leq \delta m$ we have that $D\left( x + v, i\right) = x_i$ with probability at least $3/4$ (the probability is taken only over the internal randomness of $D$).
\item For every $y \in \C^m$ and $i \in [m]$, the decoder $D(y,i)$ reads at most $q$ positions in $y$.
\end{enumerate}
The {\em dimension} of an LCC is simply its dimension as a subspace of $\C^m$. 
\end{define}

It is shown in \cite{BDWY11} that, w.l.o.g. the decoding procedure is {\em linear}, in the sense that it first picks a set of at most $q$ coordinates to read and then outputs a linear combination of them (with coefficients in $\C$). This linearity of the decoder implies that, for each coordinate in the code, there are many small subsets of the other coordinates that span it. Since each coordinate corresponds to a row of the generating matrix of the code, we obtain a configuration of points with many dependent small subsets. We will make this formal in the next definition, which is equivalent to the first definition, if one replaces $\delta$ with the slightly worse bound of $\delta/q$ (when $q$ is constant this change is negligible).

\begin{define}[Linear LCC -- second definition]\label{def-lcc2}
We say that a finite set $V = \{v_1,\ldots,v_n\} \subset \C^d$ is a $(q,\delta)$-{\em LCC} if for every $i \in [n]$ and every set $S \subset [n]$ of size $|S| \leq \delta n$ there exists a set $J \subset [n]\setminus S$ with $|J| \leq q$  such that $v_i \in \span(v_j\,|\, j \in J)$.
\end{define}

The main open problem regarding LCCs is to determine the maximum dimension (as a function of $n$) when we fix $q,\delta$ to be constants. Intuitively, the larger $d$ is, the more `information' we can transmit using the code (the {\em rate} of the code if $d/n$). While the case of $q=2$ is understood quite well ($d$ is at most logarithmic over finite fields and constant over characteristic zero \cite{BDWY11,BDSS11}), it is an open problem to determine the maximum dimension of a $q$-query LCC when $q > 2$. There are exponential gaps between the known lower and upper bound. For example, when $q=3$, the best upper bound is $d \leq O(\sqrt{n})$ \cite{Woodruff, KdW} while the best constructions give poly-logarithmic $d$ over finite fields and constant $d$ over characteristic zero. We refer the reader to the survey article \cite{Dvir-survey} for more background on LCCs and for an overview of the known constructions.
 
Due to their roots in coding theory, LCCs were traditionally studied exclusively over finite fields. The study of LCCs over arbitrary fields was initiated in \cite{BDWY11} and was motivated by its connection to the Sylvester-Gallai theorem. Another motivation comes from a work connecting LCCs with an approach for constructing {\em rigid matrices}  over infinite fields \cite{Dvi10}. We note here that for $q>2$, the best upper bounds on the dimensions of LCCs are the same, no matter what the field is. This also motivates the study of LCC's over infinite fields as a potentially easier scenario to tackle first, before proceeding to codes over finite fields (where we have fewer techniques).

Our methods enable us to prove strong upper bounds on the dimension of codes that we call {\em stable LCCs}. Before discussing the relation between stable and non-stable LCCs we give the formal definition. 

\begin{define}[$\span_B$]
Let $v,u_1,\ldots,u_m \in \C^d$. We say that $v \in \span_B(u_1,\ldots,u_m)$ if there exist $a_1,\ldots,a_m \in \C$ with $|a_i| \leq B$ for all $i$ and $v = \sum_{i=1}^m a_i u_i$.
\end{define}

\begin{define}[Stable LCC]\label{def-stablelcc}
We say that a finite set $V = \{v_1,\ldots,v_n\} \subset \C^d$ is a $(q,\delta,B,\eps)$-{\em stable LCC} if for every $i \in [n]$ and every set $S \subset [n]$ of size $|S| \leq \delta n$ there exists a set $J \subset [n]\setminus S$ with $|J| \leq q$  such that $\dist(v_i, \span_B(v_j\,|\, j \in J)) \leq \eps$.
\end{define}
 
Notice that this definition is incomparable to  Definition~\ref{def-lcc2}: On the one hand, we restrict the linear dependencies to use only coefficients of bounded magnitude. On the other hand, we allow the linear combinations to result in an `approximate' vector, instead of the exact one. To see why the bound on the coefficients is natural (once you allow approximate recovery), notice that the decoder can handle small perturbations {\em even in the `correct positions'}. Stated in the scenario of Definition~\ref{def-lcc1}, suppose that in a received codeword at most $\delta$ fraction of the positions are completely changed (to arbitrary values) and, in addition, all other coordinates are perturbed by some small $\alpha$ in Euclidean distance. Then, the decoder can still recover (approximately) the value of a given codeword coordinate by reading at most $q$ other positions, as long as $\alpha \ll \eps/qB$. Since each of the read coordinates is  multiplied by a coefficient that can be as large as $B$ and the errors sum over $q$ positions, we get at most $\alpha\cdot qB$ resulting error in the output of the decoder.\footnote{One can potentially define stable LCCs in this sense (as in Definition~\ref{def-lcc1}) and then prove (similarly to \cite{BDWY11}) that, up to constants, it is equivalent to Definition~\ref{def-stablelcc} (we did not verify the details).}

The next simple claim shows that Definition~\ref{def-stablelcc} is also stable in the sense that, perturbing the elements in a stable LCC gives another stable LCC (with slightly worse parameters).

\begin{claim}
Let $V = \{v_1,\ldots,v_n\} \subset \C^d$  be a $(q,\delta,B,\eps)$-{\em stable LCC} and let $V = \{v'_1,\ldots,v'_n\} \subset \C^d$ be such that $\dist(v_i,v_i') \leq \alpha$ for all $i \in [n]$. Then $V'$ is a  $(q,\delta,B,\eps')$-{\em stable LCC} with $\eps' \leq \eps + (qB+1)\alpha$.
\end{claim}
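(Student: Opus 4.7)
The plan is to do a direct triangle-inequality argument: transfer the stable-LCC certificate of $V$ to a certificate for $V'$ by reusing the same subset $J$ and the same coefficients, then bound the error incurred by the perturbation.

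More concretely, fix any $i\in[n]$ and any $S\subset[n]$ with $|S|\leq\delta n$. Since $V$ is a $(q,\delta,B,\eps)$-stable LCC, there exists $J\subset[n]\setminus S$ with $|J|\leq q$ and coefficients $a_j\in\C$ for $j\in J$ satisfying $|a_j|\leq B$ and $\bigl\|v_i-\sum_{j\in J} a_j v_j\bigr\|\leq\eps$. I would propose that the same $J$ and the same coefficients $\{a_j\}$ witness the stable-LCC property for $V'$ at the index $i$ with the set $S$.

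To verify this, I would just bound $\bigl\|v'_i-\sum_{j\in J} a_j v'_j\bigr\|$ by inserting the ``ideal'' vector $v_i-\sum_{j\in J} a_j v_j$ and applying the triangle inequality three times. Writing
\[
v'_i-\sum_{j\in J} a_j v'_j \;=\; (v'_i-v_i) \;+\; \Bigl(v_i-\sum_{j\in J} a_j v_j\Bigr) \;+\; \sum_{j\in J} a_j (v_j-v'_j),
\]
the first term has norm at most $\alpha$ by assumption, the middle term has norm at most $\eps$ by the stable-LCC property of $V$, and the third term has norm at most $\sum_{j\in J}|a_j|\cdot\alpha\leq qB\alpha$ since $|J|\leq q$ and $|a_j|\leq B$. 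Summing, the total norm is at most $\eps+(qB+1)\alpha$, which gives $\dist\bigl(v'_i,\span_B(v'_j:j\in J)\bigr)\leq\eps+(qB+1)\alpha$, as required.

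There is no real obstacle: the bounded-coefficient condition in Definition~\ref{def-stablelcc} is precisely what makes the perturbation on the $J$-side controllable, so the argument is essentially tight up to the constant. The only thing worth a sentence in the write-up is that the new coefficients $\{a_j\}$ still lie in the ball of radius $B$ (they are literally the old coefficients), so the $\span_B$ condition is preserved without any loss.
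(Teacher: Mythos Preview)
Your proposal is correct and is essentially identical to the paper's own proof: both reuse the same index set $J$ and the same $B$-bounded coefficients, then apply the triangle inequality to pick up one $\alpha$ from perturbing $v_i$ and at most $qB\alpha$ from perturbing the $v_j$'s. If anything, your write-up is slightly more careful, since you explicitly quantify over the avoided set $S$ and note that the reused coefficients still lie in the $B$-ball, both of which the paper leaves implicit.
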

\begin{proof}
Take some $v_i \in V$ and a set $J \subset [n]$ of size $|J| \leq q$ such that $\dist(v_i, \span_B(v_j\,|\, j \in J)) \leq \eps$. Then, there exist coefficients $b_j, j \in J$ with $|b_j| \leq B$ and such that $$ \left\| v_i - \sum_{j \in J}b_j v_j \right\| \leq \eps.$$ Replacing $v_i$ with $v_i'$ we get that $$ \left\|v_i' - \sum_{j \in J}b_j v_j' \right\| \leq \eps + \|v_i - v_i'\| + \sum_{j \in J}b_j\|v_j - v_j'\| \leq \eps + (qB+1)\alpha.$$ 
\end{proof}

Notice that, if we didn't have the bound on the coefficients in the span, the small perturbations would have resulted in  large errors in the linear combinations. Intuitively, if $u$ is not in $\span_B(u_1,\ldots,u_m)$ then a small perturbation to the $u_i$'s may result in $u$ being very far  from $\span(u_1,\ldots,u_m)$. This explains the need for two separate stability parameters, $\eps$  and $B$.

Our main result regarding stable LCC's is the following theorem:

\begin{THM}\label{thm-apxlcc}
Let $V = \{v_1,\ldots,v_n\} \subset \C^d$ be a $(q,\delta,B,\eps)$-{\em stable LCC}. Then, $$\dim_{\eps'}(V) \leq O((qB/\delta)^4)$$ with $$\eps' = O( q^2 B \eps / \delta^{1.5}).$$
\end{THM}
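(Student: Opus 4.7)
My plan would be to adapt the matrix construction used to prove Theorem~\ref{thm-apxsggen} to the LCC setting, where the local dependencies now involve $q{+}1$ points rather than triples. For each $i \in [n]$, I apply the stable LCC property iteratively with $S$ equal to the union of previously selected witness sets, producing a family $\mathcal{J}_i$ of $\Omega(\delta n/q)$ pairwise disjoint $q$-subsets $J \subset [n]$ such that $\dist(v_i, \span_B(v_j : j \in J)) \leq \eps$, together with explicit coefficients $(b_j^{(i,J)})_{j \in J}$ of magnitude at most $B$. Let $T$ be the family of $(q{+}1)$-tuples $\{i\} \cup J$ obtained this way, and let $p = \Omega(\delta n/q)$; each $i$ is the ``special'' index of at least $p$ tuples. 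Form the matrix $M \in \C^{|T| \times n}$ whose row indexed by $t = \{i\} \cup J \in \mathcal{J}_i$ has a $1$ in column $i$, a $-b_j^{(i,J)}$ in each column $j \in J$, and zeros elsewhere. With $A$ the $n \times d$ matrix of rows $v_1, \dots, v_n$, one has $\|MA\|_F^2 \leq \eps^2 |T|$.

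The heart of the argument is then the spectral analysis of $M^*M$. Its diagonal entries are at least $p$, since every tuple in which $i$ is special contributes $1$ to $(M^*M)_{ii}$; each off-diagonal entry $(M^*M)_{ii'}$ is a sum over tuples containing both $i$ and $i'$, each summand having magnitude at most $B^2$. Assuming a bound of the form $\|M^*M - D\|_F^2 = O\bigl(p^2 \cdot (qB/\delta)^4\bigr)$ can be obtained, the Hoffman--Wielandt inequality implies $M^*M$ has at most $k = O((qB/\delta)^4)$ eigenvalues below $p/2$. Let $U \subset \C^n$ denote the span of the corresponding right singular vectors of $M$. Because $\|MA\|_F$ is small and $M$ acts with singular value at least $\sqrt{p/2}$ off $U$, each column of $A$ lies within $O(\eps\sqrt{|T|/p})$ of $U$, so the column space of $A$ is approximated by a $k$-dimensional subspace of $\C^n$. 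A row-column duality lemma (in the spirit of the one used for Theorem~\ref{thm-apxsggen}) combined with an interpolation step to convert an $\ell_2$-average row bound into a worst-case bound over all $n$ rows then yields $\dim_{\eps'}(V) \leq k$ with $\eps' = O(q^2 B \eps / \delta^{1.5})$.

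The hard part is controlling $\|M^*M - D\|_F^2$. Each pair $\{i,i'\}$ appears in at most one tuple of $\mathcal{J}_i$ and one of $\mathcal{J}_{i'}$ (since sets within each $\mathcal{J}_k$ are disjoint), but can appear in a tuple of $\mathcal{J}_k$ for any of the other $n-2$ indices $k$. In the worst case a naïve greedy construction allows a single pair to occur in $\Theta(n)$ tuples, which yields a dimension bound that scales with $n$. To reach the target $(qB/\delta)^4$, one must select the families $\mathcal{J}_i$ carefully --- using the fact that the stable LCC property produces a witness $J$ for \emph{every} admissible $S$, so there is a large reservoir of valid $J$'s per $i$ to randomize over --- so that each pair of indices ends up in only $O(q/\delta)$ tuples of $T$ while maintaining $|\mathcal{J}_i| = \Omega(\delta n/q)$. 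A secondary subtlety is that the LCC decoding coefficients $b_j$ have no uniform lower bound, so the $(\eps,\mu)$-dependence hypothesis of Theorem~\ref{thm-apxsggen} with $\mu = 1/B$ does not transfer verbatim; however, diagonal dominance of $M^*M$ is driven by the $1$'s at special positions rather than by a uniform lower bound on all entries of $M$, so the argument survives with $1/B$ playing the role of $\mu$ only in the off-diagonal magnitude bounds.
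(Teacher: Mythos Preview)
Your outline tracks the paper's strategy through the later steps (Hoffman--Wielandt, column-to-row transfer, interpolation), and you have correctly located the hard point. The gap is precisely there: your proposed fix for the off-diagonal mass of $M^*M$ does not go through as stated.

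You suggest achieving pair multiplicity $g = O(q/\delta)$ by ``selecting the families $\mathcal J_i$ carefully'' and ``randomizing over'' the reservoir of valid witnesses. But the stable-LCC hypothesis guarantees only the \emph{existence} of some admissible $J$ for each pair $(i,S)$; it gives no lower bound on how many such $J$ there are, so there may be essentially nothing to randomize over. The families $\mathcal J_i$ could be forced, and a single pair $\{j,j'\}$ could genuinely land in $\Theta(n)$ tuples. Your target $\|M^*M - D\|_F^2 = O\bigl(p^2(qB/\delta)^4\bigr)$ is therefore unjustified, and with it the dimension bound collapses to something growing with $n$.

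The paper avoids this difficulty entirely by a trick you are missing: instead of analyzing $M^*M$, it first \emph{sums the rows within each block}. Concretely, let $\hat M$ be the $n\times n$ matrix whose $i$-th row is the sum of the $k = |\mathcal J_i| = \Omega(\delta n/q)$ rows of your block $M_i$. Because those rows have supports disjoint away from position $i$, the $i$-th row of $\hat M$ has entry $k$ at position $i$ and entries of magnitude at most $B$ elsewhere. Thus $\hat M$ is diagonal-dominant \emph{automatically}, with no cross-block design condition needed. One then applies Lemma~\ref{lem-pert} to $X=\hat M^*\hat M$, whose diagonal entries are at least $k^2$ and whose off-diagonal entries are $O(kB+nB^2)=O(nB^2)$, yielding $\dim(L)\le O(n^4B^4/k^4)=O((qB/\delta)^4)$ directly. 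The cost of the row-summing is a Cauchy--Schwarz loss $\|\hat E\|^2=\|R^TE\|^2\le n\|E\|^2\le n^3\eps^2$, which is exactly absorbed in the final $\eps'=O(q^2B\eps/\delta^{1.5})$. The interpolation at the end is also simpler than your sketch: once all but $\delta n/2$ points are close to $L'$, one invokes the LCC property with $S$ equal to the bad set, so every $v_i$ has a witness $J$ lying entirely in the good set.
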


In particular, when $q$ is a constant and $B$ and $\delta$ are fixed, the upper bound on $\dim_{\eps'}$ can be interpreted as saying that there {\em do not exist} stable $q$-query LCCs, where `do not exist' means that the amount of information one can transmit is constant, regardless of the codeword length. The proof of Theorem~\ref{thm-apxlcc}, which follows the same lines as the proof of the Sylvester-Gallai type theorems, works also for the more general setting where $V$ is allowed to be an ordered multiset (i.e., when different $v_i$'s can repeat several times). 

If one sets $\eps=0$ the definition of stable LCC changes into a definition of an LCC with bounded coefficients. That is, the linear dependencies are required to be exact (as in the usual definition of an LCC) and, in addition, need to use bounded coefficients. Applying Theorem~\ref{thm-apxlcc} to this special case one gets $\eps'=0$ and so obtains the stronger conclusion that the set $V$ is actually {\em contained} in a low dimensional space. Stated more formally, we have:
\begin{cor}
Let $V = \{v_1,\ldots,v_n\} \subset \C^d$ be a $(q,\delta,B,0)$-{\em stable LCC}. Then, $$\dim(V) \leq O((qB/\delta)^4)$$.
\end{cor}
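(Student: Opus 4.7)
The plan is to derive this corollary as an immediate specialization of Theorem~\ref{thm-apxlcc}. First I would observe that a $(q,\delta,B,0)$-stable LCC is, by Definition~\ref{def-stablelcc}, in particular a $(q,\delta,B,\eps)$-stable LCC for $\eps = 0$, so Theorem~\ref{thm-apxlcc} applies directly with $\eps = 0$.

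Applying the theorem yields $\dim_{\eps'}(V) \leq O((qB/\delta)^4)$ where $\eps' = O(q^2 B \eps / \delta^{1.5})$. Substituting $\eps = 0$ gives $\eps' = 0$, so $\dim_0(V) \leq O((qB/\delta)^4)$.

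The final step is to observe that $\dim_0(V) = \dim(\mathrm{span}(V))$. Indeed, by definition $\dim_0(V)$ is the minimal $k$ such that there exists a $k$-dimensional subspace $L \subset \C^d$ with $\dist(v,L) \leq 0$ for every $v \in V$. Since $L$ is a closed subspace, $\dist(v,L) = 0$ is equivalent to $v \in L$, so the condition reduces to $V \subseteq L$, whose minimal dimension witness is exactly the linear span of $V$. Combining with the bound above gives $\dim(V) \leq O((qB/\delta)^4)$.

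There is no genuine obstacle here: the content is entirely in Theorem~\ref{thm-apxlcc}, and the corollary is just the observation that the error bound $\eps'$ in that theorem is linear in $\eps$, so that the exact case ($\eps = 0$) forces exact containment in a low-dimensional subspace rather than merely approximate closeness.
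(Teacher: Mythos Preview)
Your proposal is correct and matches the paper's own argument essentially verbatim: the paper also derives the corollary by specializing Theorem~\ref{thm-apxlcc} to $\eps=0$, observing that $\eps'=0$, and concluding that $V$ is actually contained in (rather than merely close to) a subspace of dimension $O((qB/\delta)^4)$.
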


\section{Proof of Theorem~\ref{thm-apxsggen}}\label{sec-pfgen}

We will derive Theorem~\ref{thm-apxsggen} from the following, more refined, statement.

\begin{thm}\label{thm-apxsggenav}
Under the same conditions as in Theorem~\ref{thm-apxsggen}, there exists a subspace $L \subset \C^d$ with $$\dim(L) \leq \frac{2n^2 g^2}{p^2 \mu^4}$$ and such that 	$$ \sum_{i=1}^n \dist( v_i, L)^2 \leq \frac{4|T| \eps^2}{\mu^2 p}.$$ 
\end{thm}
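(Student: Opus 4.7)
The plan is to follow the matrix-analytic strategy outlined in the proof overview. Arrange the vectors $v_1, \ldots, v_n$ as the rows of an $n \times d$ matrix $A$. For each triple $t = \{i,j,k\} \in T$, pick coefficients $\alpha_t, \beta_t, \gamma_t \in \C$ with $|\alpha_t|, |\beta_t|, |\gamma_t| \in [\mu, 1]$ and $\|\alpha_t v_i + \beta_t v_j + \gamma_t v_k\| \leq \eps$, as guaranteed by $(\eps,\mu)$-dependence. Assemble an $|T| \times n$ matrix $M$ whose $t$-th row has entries $\alpha_t, \beta_t, \gamma_t$ in columns $i,j,k$ (and zeros elsewhere). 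Then the $t$-th row of $MA$ equals $\alpha_t v_i + \beta_t v_j + \gamma_t v_k$, so $\|MA\|^2 \leq |T|\eps^2$.

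The core of the argument is to show that $M$ has few small singular values, equivalently that $X := M^* M$ has few small eigenvalues. The design hypothesis yields strong diagonal dominance of $X$: for each $i$, $X_{ii} = \sum_{t \ni i} |M_{t,i}|^2 \geq p\mu^2$, while for $i \neq j$, $|X_{ij}| \leq g$ since at most $g$ triples contain both $i$ and $j$ and each contribution has modulus at most $1$. Let $D$ denote the diagonal part of $X$. The Hoffman-Wielandt inequality applied to the Hermitian pair $(X,D)$ gives
$$\sum_i (\lambda_i(X) - \lambda_i(D))^2 \leq \|X - D\|^2 \leq n^2 g^2,$$
where eigenvalues are sorted in decreasing order. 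Since every $\lambda_i(D) \geq p\mu^2$, choosing the threshold $\tau := p\mu^2/4$ forces every index $i$ with $\lambda_i(X) \leq \tau$ to contribute at least $(3p\mu^2/4)^2$ to the left-hand side; hence the number of such indices is at most $16n^2g^2/(9p^2\mu^4) < 2n^2g^2/(p^2\mu^4)$.

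Let $W^\perp \subset \C^n$ be the span of those right singular vectors of $M$ with squared singular value at most $\tau$, so $\dim(W^\perp) \leq 2n^2g^2/(p^2\mu^4)$. For any $c \in \C^n$, orthogonally decompose $c = c_W + c_{W^\perp}$; since $M$ acts on $W$ with every singular value at least $\sqrt{\tau}$, we have $\|Mc\|^2 \geq \|Mc_W\|^2 \geq \tau \|c_W\|^2$. Applying this column by column to $A$ and summing,
$$\|A - P_{W^\perp}A\|^2 = \sum_c \|c_W\|^2 \leq \|MA\|^2 / \tau \leq 4|T|\eps^2/(p\mu^2),$$
where the sum ranges over columns $c$ of $A$ and $P_{W^\perp}$ is applied column-wise. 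Set $B := P_{W^\perp}A$; its columns lie in $W^\perp$, so $\rank(B) \leq \dim(W^\perp)$.

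Finally, convert this column-wise bound into a statement about rows. Take $L \subset \C^d$ to be the row space of $B$, which has dimension $\rank(B) \leq 2n^2g^2/(p^2\mu^4)$. The $i$-th row of $B$ lies in $L$, so $\dist(v_i, L)^2 \leq \|v_i - B_i\|^2$, where $B_i$ denotes the $i$-th row of $B$; summing over $i$ recovers $\|A-B\|^2$ and yields $\sum_i \dist(v_i, L)^2 \leq 4|T|\eps^2/(p\mu^2)$, as required. The main obstacle is the Hoffman-Wielandt step, which is the non-trivial bridge from the combinatorial $(p,g)$-design and the lower bound $\mu$ on coefficient magnitudes to quantitative spectral control over $X$; the choice of $\tau$ has to balance the upper bound on $\dim(W^\perp)$ against the average error $|T|\eps^2/\tau$, and the threshold $p\mu^2/4$ is tuned to hit both claimed constants simultaneously.
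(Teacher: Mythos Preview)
Your proposal is correct and follows essentially the same route as the paper's proof: the construction of $M$ from the triple system, the diagonal-dominance of $X=M^*M$ from the $(p,g)$-design, the Hoffman--Wielandt bound on the number of eigenvalues below $p\mu^2/4$, the column-wise distance estimate $\sum_c \|c_W\|^2 \le \|MA\|^2/\tau$, and the transfer to rows via the row space of the column-projected matrix are all exactly the steps the paper takes (the paper's matrix $Y$ in Claim~4.6 is precisely your $B=P_{W^\perp}A$, since the nearest point in $W^\perp$ to a column is its orthogonal projection). The only cosmetic difference is that the paper pads to $d=n$ for notational convenience, whereas you work directly with $A$ as an $n\times d$ matrix; your version is cleaner in this respect.
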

\begin{proof}

First, observe that, for convenience, we can take $d=n$ so that the vectors $v_i$ are in $\C^n$. The case $d > n$ is not interesting since we can restrict our attention to the span of the $n$ vectors. The case $d < n$ can be similarly handled by padding each vector with zeros.  

Let $m = |T|$. We  use $T$ to construct an $m \times n$ matrix $M$  so that there is a one-to-one correspondence between rows of $M$ and  elements of $T$. By our assumptions, for each triple $t = \{i,j,k\} \in T$ there are complex numbers $\alpha,\beta,\gamma$ such that $\|\alpha v_i + \beta v_j + \gamma v_k\| \leq \eps$ and s.t $\mu \leq |\alpha|,|\beta|,|\gamma| \leq 1$. Let $s_t$ denote the row vector in $\C^n$ with the value $\alpha$ in position $i$, the value $\beta$ in position $j$, the value $\gamma$ in position $k$ and zeros everywhere else. We define $M$ to be the matrix with  rows  $s_t$ where $t$ goes over all triples in $T$ (in some order). 

Next, let $A$ be a complex $n \times n$ matrix whose $i$'th row is the vector $v_i$. Then, from our definition of the rows of $M$, we have that the rows of the $m \times n$ matrix 
\begin{equation}\label{eq-E=MA}
	E = M  A
\end{equation}
all have norm at most $\eps$. 

The next claim summarizes some of the properties of $M$ that we will use. All three items follow immediately from the fact that $T$ is a $(p,g)$-design and the bounds on the entries of $M$.
\begin{claim}\label{cla-propM}
Let $M$ be as above and let $M_j \in \C^m$, $j \in [n]$ denote the $j$'th column of $M$. Then
\begin{enumerate}
	\item Each entry of $M$ has absolute value at least $\mu$ and at most $1$.
	\item For each $j \in [n]$, $\|M_j\|^2 \geq p \mu^2 $.
	\item For each $j \neq j' \in [n]$, $\left|\ip{M_j}{M_{j'}}\right| \leq g$.
\end{enumerate}
\end{claim}

The main technical ingredient in the proof is the following simple  observation regarding the eigenvalues of {\em diagonal dominant} matrices, i.e., matrices in which the diagonal elements are much larger than the off-diagonal elements. This lemma can be viewed as an extension of a folklore result regarding the {\em rank} of such matrices  (see, e.g., \cite{Alo09}). The proof is a simple application of the Hoffman-Wielandt inequality.

\begin{lem}\label{lem-pert}
Let $X = (X_{ij})_{i,j \in [n]}$ be an $n \times n$ complex Hermitian matrix with  eigenvalues $ \lambda_1, \ldots,\lambda_n$. Suppose that for all $i \in [n]$ we have $X_{ii} \geq K$, where $K$ is some positive real number. Then,
$$ \left| \left\{ i \in [n]\,\,\,|\,\,\, \lambda_i \leq K/4 \right\} \right| \leq \frac{2}{K^2} \sum_{i \neq j} |X_{ij}|^2.$$
\end{lem}
\begin{proof}
Let $D$ be an $n \times n$ diagonal matrix with $D_{ii} = X_{ii}$ for all $i \in [n]$. Clearly, the eigenvalues of $D$ are $D_{11},\ldots,D_{nn}$. The Hoffman-Wielandt inequality \cite{HW53} states that, under some ordering of the eigenvalues of $X$ (w.l.o.g the one we have chosen) we have
$$ \sum_{i \in [n]} | \lambda_i - D_{ii} |^2 \leq ||X - D||^2 = \sum_{i \neq j} |X_{ij}|^2. $$
Using the fact that all $D_{ii}$'s are at least $K$ we get the required bound.
\end{proof}

Let $\sigma_1,\ldots,\sigma_n$ be the singular values of the matrix $M$ (recall that these are the square roots of the eigenvalues of the PSD matrix $M^* M$). Let $r_1,\ldots,r_n$ be the corresponding right singular vectors (i.e., the corresponding eigenvectors of $M^*M$). We thus have
\begin{enumerate}
	\item $r_1,\ldots,r_n$ form an orthonormal basis of $\C^n$.
	\item For each $j \in [n]$, $\|Mr_j\| = \sigma_j$.
	\item The vectors $ Mr_1, \ldots, Mr_n$ are orthogonal (i.e., $\ip{ Mr_i}{ Mr_j} = 0$ for $i \neq j$).
\end{enumerate}
Let $$J = \{ j \in [n] \,|\, \sigma_j \leq \mu \sqrt{p}/2 \}$$ and let $$ L = \span\{ r_j \,|\, j \in J \}.$$ We will now show that $L$ is of small dimension and that most columns of $A$ are close to $L$. We start by bounding the dimension of $L$.
\begin{claim}\label{cla-dimL}
Let $L$ be as above. Then $|J| = \dim(L) \leq \frac{2n^2 g^2}{p^2 \mu^4}$.
\end{claim}
\begin{proof}
Consider the $n \times n$ matrix $X =  M^*  M$ with eigenvalues $\sigma_1^2,\ldots,\sigma_n^2$. By Claim~\ref{cla-propM} the  diagonal elements of $X$ are all lower-bounded by $ p\mu^2 $ and the off-diagonal elements of $X$ are all upper bounded by $g$ in absolute value. Using Lemma~\ref{lem-pert}, and these bounds on the entries of $X$, we get that
$$ \left| \left\{ i \in [n] \,\,|\,\, \sigma_i^2 \leq p \mu^2/4 \right\}\right| \leq \frac{2n^2 g^2}{p^2 \mu^4}. $$ Taking square roots completes the proof.
\end{proof}

 Let $u_1,\ldots,u_n$ denote the columns of $A$. We can write each $ u_j$ in the orthonormal basis $r_1,\ldots,r_n$ in a unique way as $$  u_j = \sum_{k=1}^n \alpha_{jk} r_k.$$ Observe that 
\begin{equation}\label{eq-distuL}
	\dist( u_j, L)^2 = \sum_{k \not\in J} |\alpha_{jk}|^2
\end{equation} 

Denote the rows of the matrix $E =  M   A$ by $e_i, i \in [m]$ so that $\|e_i\| \leq \eps$ for all $i \in [m]$. Let $f_1,\ldots,f_n$ be the columns of $E$ and observe that
\begin{equation}\label{eq-sumnormf}
	\sum_{j \in [n]}\| f_j\|^2 = \sum_{i \in [m] } \|  e_i\|^2 \leq m\eps^2 
\end{equation}

The next claim bounds the sum of distances of the vectors $ u_j$ to the subspace $L$.
\begin{claim}\label{cla-sumofdist}
With the above notations, we have
$$ \sum_{j=1}^n \dist(  u_j, L)^2 \leq \frac{4m \eps^2}{\mu^2 p}.$$
\end{claim} 
\begin{proof}
Using  (\ref{eq-distuL}), (\ref{eq-sumnormf}), the orthogonality of the $ Mr_j$'s and the fact that $\sigma_j >  \frac{\mu \sqrt{p}}{2}$ for all $j \not\in J$, we have
\begin{eqnarray*}
m\eps^2 &\geq& \sum_{j \in [n]} \|f_j\|^2 = \sum_{j \in [n]} \|  M u_j \|^2 \\
&=&  \sum_{j \in [n]}\left\| \sum_{k \in [n]} \alpha_{jk}  Mr_k \right\|^2 \\
&=& \sum_{j \in [n]} \sum_{k \in [n]} |\alpha_{jk}|^2 \sigma_k^2 \\
&\geq& \frac{\mu^2 p}{4} \sum_{j \in [n]} \sum_{k \not\in J} |\alpha_{jk}|^2  \\
&=& \frac{\mu^2 p}{4} \sum_{j \in [n]} \dist( u_j, L)^2.  
\end{eqnarray*}
This proves the claim.
\end{proof}

 We now use Claim~\ref{cla-sumofdist} to deduce that many {\em rows} of $ A$ are close to a low dimensional subspace. 
\begin{claim}\label{cla-sumofdistrows}
There exists a subspace $L' \subset \C^n$ with $\dim(L') \leq \frac{2n^2 g^2}{p^2 \mu^4}$ and s.t
$$ \sum_{j=1}^n \dist(  v_j, L')^2 \leq \frac{4m \eps^2}{\mu^2 p}.$$
\end{claim}
\begin{proof}
Let $Y$ be an $n\times n$ matrix such that the $j$'th  column of $Y$ is the element of  $L$ closest to $ u_j$. If we let $L'$ be the span of the {\em rows} of $Y$ we have $\dim(L') \leq \dim(L)$ and, using Claim~\ref{cla-sumofdist}, 
$$ \sum_{j \in [n]}\dist( v_j,L')^2 \leq \| Y -  A\|^2 = \sum_{j \in [n]}\dist( u_j,L)^2  \leq \frac{4m \eps^2}{\mu^2 p}.$$
\end{proof}

This claim completes the proof of Theorem~\ref{thm-apxsggenav}.
\end{proof}

\subsection*{Proof of Theorem~\ref{thm-apxsggen} using Theorem~\ref{thm-apxsggenav}}

From Theorem~\ref{thm-apxsggenav} we can get a large subset of $V$ that is $\eps'$-close to a low dimensional subspace $L$. To derive the conclusion of Theorem~\ref{thm-apxsggen}, we will show that the rest of the points in $V$ are also close to $L$, though with a slightly worse bound on the distance. This will follow by showing that, for every point $v \in V$, there are two points $u,w \in V$ that are close to $L$ and s.t $v$ is close to the line passing through them. This will imply that $v$ is also close to $L$. The details follow.

First, apply Theorem~\ref{thm-apxsggenav} to get a subspace $L$ so that $$\dim(L) \leq \frac{2n^2 g^2}{p^2 \mu^4}$$ and such that 	$$ \sum_{i=1}^n \dist( v_i, L)^2 \leq \frac{4m \eps^2}{\mu^2 p}.$$ 

Let $$I = \left\{i \in [n] \, \left|\, \dist(v_i,L)^2 >  \frac{4gm \eps^2}{\mu^2 p^2} \right. \right\}$$ and observe that $|I| < p/g$. Our final step is to argue that the points $v_i, i \in I$ are also close to $L'$ since they are close to the span of two points $v_j,v_k$ with $j,k \not\in I$ (using the design properties of $T$).
\begin{claim}\label{cla-therest}
For each $i \in I$ there are indices $j,k \in [n] \setminus I$ such that $\{i,j,k\} \in T$. 
\end{claim}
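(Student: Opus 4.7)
The plan is a direct counting argument using the two defining properties of a $(p,g)$-design together with the bound $|I| < p/g$ established just above.

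Fix $i \in I$. By the first design property, there are at least $p$ triples of $T$ containing $i$. I would call a triple $\{i,j,k\} \in T$ \emph{bad} if at least one of $j,k$ lies in $I$, and \emph{good} otherwise. The goal is to show that a good triple exists, i.e., that the number of bad triples is strictly less than $p$.

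To bound the number of bad triples, observe that every bad triple through $i$ contains some index $\ell \in I \setminus \{i\}$, and conversely, for each such $\ell$, the second design property says that the number of triples in $T$ containing both $i$ and $\ell$ is at most $g$. Summing over $\ell \in I \setminus \{i\}$ (this may overcount triples that contain two other elements of $I$, which only helps), the total number of bad triples through $i$ is at most
\[
g \cdot (|I|-1) \;<\; g \cdot \frac{p}{g} \;=\; p,
\]
where I used $|I| < p/g$. Since the total number of triples through $i$ is at least $p$, at least one triple $\{i,j,k\} \in T$ has $j,k \in [n]\setminus I$, which is exactly the claim.

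The only step that requires attention is making sure the bound $|I| < p/g$ is in hand; this has been established in the paragraph immediately preceding the claim from the inequality $\sum_{i=1}^n \dist(v_i,L)^2 \leq 4m\eps^2/(\mu^2 p)$ combined with the threshold $4gm\eps^2/(\mu^2 p^2)$ defining $I$. Other than that, the argument is a one-line double-counting and I do not foresee any obstacle.
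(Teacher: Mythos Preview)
Your argument is correct and is essentially the same as the paper's: both use the $(p,g)$-design properties together with $|I|<p/g$ to show that the triples through $i$ cannot all meet $I\setminus\{i\}$. The paper phrases it as a pigeonhole contradiction (if every triple through $i$ hit some $j\in I\setminus\{i\}$, one such $j$ would lie in more than $g$ triples with $i$), while you do the equivalent direct count $g(|I|-1)<p$.
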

\begin{proof}
Fix some $i \in I$. If the claim is false then every triple in $T$ that contains $i$ must have some other element in $I$. By a pigeon hole argument, there must be an element $j \in I \setminus \{i\}$ and at least $p/|I| > g$ triples containing both $i$ and $j$, contradicting the design property of $T$.
\end{proof}

We will need the following simple lemma:

\begin{lem}\label{lem-trianglesubsp}
Let $u,v,w \in \C^d$ be an $(\eps,\mu)$-dependent triple. Let $L \subset \C^d$ be a subspace with $\dist(v,L),\dist(u,L) \leq \rho$ for some $\rho >0$. Then $\dist(w,L) \leq (\eps + 2\rho)/\mu$.
\end{lem}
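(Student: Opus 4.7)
The plan is to unpack the $(\eps,\mu)$-dependence as an explicit approximate linear relation expressing $w$ in terms of $u$ and $v$, then produce a witness in $L$ by substituting $u$ and $v$ with their nearest points in $L$ and controlling the accumulated error via the triangle inequality.

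Concretely, by the definition of $(\eps,\mu)$-dependence, there exist complex numbers $\alpha,\beta,\gamma$ with $|\alpha|,|\beta|,|\gamma|\in[\mu,1]$ such that $\|\alpha u+\beta v+\gamma w\|\leq\eps$. Since $|\gamma|\geq\mu>0$ we may solve for $w$ and write
\[
w = -\tfrac{\alpha}{\gamma}\, u - \tfrac{\beta}{\gamma}\, v + \tfrac{1}{\gamma}\, r,
\qquad \|r\|\leq\eps.
\]
By hypothesis we can choose $u',v'\in L$ with $\|u-u'\|\leq\rho$ and $\|v-v'\|\leq\rho$. The natural candidate for a nearby point in $L$ is then
\[
w' := -\tfrac{\alpha}{\gamma}\, u' - \tfrac{\beta}{\gamma}\, v' \in L.
\]

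It then remains to bound $\|w-w'\|$ by the triangle inequality:
\[
\|w-w'\| \leq \tfrac{|\alpha|}{|\gamma|}\|u-u'\| + \tfrac{|\beta|}{|\gamma|}\|v-v'\| + \tfrac{1}{|\gamma|}\|r\|
\leq \tfrac{\rho}{\mu} + \tfrac{\rho}{\mu} + \tfrac{\eps}{\mu} = \tfrac{\eps+2\rho}{\mu},
\]
where we used $|\alpha|,|\beta|\leq 1$ and $|\gamma|\geq\mu$. Since $w'\in L$, this yields $\dist(w,L)\leq(\eps+2\rho)/\mu$, as required.

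There is no real obstacle here; the argument is a direct triangle-inequality calculation. The only point worth noting is that the inversion by $\gamma$ is exactly where the lower bound $\mu$ on the coefficient magnitudes is essential — without it, an approximate dependence could not be turned into an approximate expression for $w$ with bounded coefficients, and the perturbation errors $\rho$ would not translate linearly to the distance from $w$ to $L$.
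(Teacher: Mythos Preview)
Your argument is correct and is essentially identical to the paper's: both pick $u',v'\in L$ within $\rho$ of $u,v$, divide the approximate relation by $\gamma$, and bound $\|w-w'\|$ via the triangle inequality using $|\alpha|,|\beta|\leq 1$ and $|\gamma|\geq\mu$. The only cosmetic difference is that you introduce an explicit residual vector $r$ whereas the paper works directly with the norm $\|w+(\alpha/\gamma)u+(\beta/\gamma)v\|\leq\eps/|\gamma|$.
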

\begin{proof}
Let $\alpha,\beta,\gamma$ be such that 	$|\alpha|,|\beta|,|\gamma| \in [\mu,1]$ and  $\| \alpha u + \beta v + \gamma w \| \leq \eps$. Let $v',u' \in L$ be s.t $\| v - v'\|,\|u-u' \| \leq \rho$. Then
\begin{eqnarray*}
\dist(w,L) &\leq& \| w + (\alpha/\gamma)v' + (\beta/\gamma)u' \| \\
&\leq& \| w + (\alpha/\gamma)v + (\beta/\gamma)u \| + \| (\alpha/\gamma)v - (\alpha/\gamma)v'\| + \| (\beta/\gamma)u - (\beta/\gamma)u'\| \\
&\leq& \eps/|\gamma| + |\alpha/\gamma|\rho + |\beta/\gamma|\rho \\
&\leq& (\eps + 2\rho)/\mu.
\end{eqnarray*}
\end{proof}

Combining Claim~\ref{cla-therest} with Lemma~\ref{lem-trianglesubsp} we have that each $v_i, i \in [n]$ is $\eps'$ close to $L$ with $ \eps' \leq (\eps + 2\rho)/\mu$, where $\rho = \frac{2\eps \sqrt{gm}}{p\mu}$. Simplifying, we get $$\eps' \leq \frac{5\eps \sqrt{gm}}{p\mu^2}$$ as was required. This completes the proof of Theorem~\ref{thm-apxsggen}.\qed

\section{Proof of Theorem~\ref{thm-apxsgaffine}}\label{sec-pfaffine}
We start with some preliminary lemmas.

\begin{lem}\label{lem-triplecoefaffine}
Let $\{u,v,w\} \in \C^d$ be $B$-balanced. If $w \in \line_\eps(u,v)$ with $\eps < 1/2$ then the triple $u,v,w$ is $(\eps,1/4B)$-dependent. Furthermore, there exists a complex $\alpha$ with $|\alpha| \geq 1/4B$ such that $\| w - \alpha u - (1-\alpha)v \| \leq \eps$.
\end{lem}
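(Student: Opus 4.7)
The natural candidate is the orthogonal projection coefficient $\alpha^* := \langle w-v,\,u-v\rangle/\|u-v\|^2$, for which $\alpha^* u + (1-\alpha^*) v$ is the point of $\line(u,v)$ closest to $w$. The hypothesis $w \in \line_\eps(u,v)$ is exactly the statement that the residual $r := w - \alpha^* u - (1-\alpha^*) v$ satisfies $\|r\| \leq \eps$. The whole proof then amounts to controlling $|\alpha^*|$ and $|1-\alpha^*|$ well enough that a suitable rescaling of the identity $\alpha^* u + (1-\alpha^*) v - w = -r$ produces a relation with coefficients in $[1/4B,\,1]$.

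I would first establish four basic bounds on $\alpha^*$ using $B$-balancedness and the triangle inequality. Writing $w - v = \alpha^*(u-v) + r$ together with $\|w-v\| \geq 1$, $\|u-v\| \leq B$, $\|r\| \leq \eps < 1/2$ gives $|\alpha^*| \geq (1-\eps)/B > 1/(2B)$; the symmetric computation applied to $w-u$ yields $|1-\alpha^*| > 1/(2B)$. Cauchy--Schwarz applied to the defining formula for $\alpha^*$ provides the matching upper bounds $|\alpha^*|, |1-\alpha^*| \leq B$. The first lower bound already proves the ``furthermore'' clause, by taking $\alpha := \alpha^*$.

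For the $(\eps, 1/4B)$-dependence I would set $M := \max(|\alpha^*|, |1-\alpha^*|, 1) \geq 1$ and rescale to the coefficient triple $(\alpha^*/M,\,(1-\alpha^*)/M,\,-1/M)$ on $(u,v,w)$. By construction all three moduli are at most $1$ and the new residual has norm $\|r\|/M \leq \eps$; only the lower bound on the smallest modulus remains. A short case analysis on $M$ then finishes: if $M = 1$, the minimum is $\min(|\alpha^*|,|1-\alpha^*|)\geq 1/(2B)$; if $M > 1$ and both $|\alpha^*|,|1-\alpha^*|\geq 1$, the minimum is $1/M \geq 1/B$.

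I expect the remaining sub-case --- $M > 1$ with exactly one of $|\alpha^*|,|1-\alpha^*|$ below $1$, say $|1-\alpha^*| < 1$ --- to be the only real obstacle, since the naive bounds $M \leq B$ and smallest modulus $\geq 1/(2B)$ would only give $(\eps, 1/(2B^2))$-dependence (off by a factor of $B$). The way out is the triangle-inequality observation $|\alpha^*| = |1-(1-\alpha^*)| \leq 1 + |1-\alpha^*| < 2$, which forces $M < 2$ in this regime and hence gives a smallest modulus at least $(1/(2B))/2 = 1/(4B)$, matching the claimed constant.
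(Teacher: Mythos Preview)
Your argument is correct and in fact more complete than the paper's own proof. Both proofs obtain the lower bound on $|\alpha|$ (and, by symmetry, on $|1-\alpha|$) via a triangle-inequality estimate anchored on $\|w-v\|\geq 1$: the paper first translates $w$ to the origin and bounds $\|w-v\|\leq \eps + |\alpha|(\|u\|+\|v\|)\leq \eps + 2|\alpha|B$, yielding $|\alpha|\geq 1/(4B)$, while you work directly with the decomposition $w-v=\alpha^*(u-v)+r$ to get the sharper $|\alpha^*|\geq (1-\eps)/B>1/(2B)$. Either version proves the ``furthermore'' clause.

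Where the two arguments diverge is the first assertion, that the triple is $(\eps,1/4B)$-dependent. The definition requires \emph{all three} coefficients to lie in $[1/4B,1]$, and nothing so far forces $|\alpha|$ or $|1-\alpha|$ to be at most $1$ (indeed they can be as large as roughly $B$ when $w$ lies well outside the segment $[u,v]$). The paper's proof is silent on this point; as written it verifies only the lower bound and stops. Your rescaling by $M=\max(|\alpha^*|,|1-\alpha^*|,1)$ and the accompanying case analysis --- especially the observation $|\alpha^*|\leq 1+|1-\alpha^*|<2$ in the mixed sub-case --- is exactly what is needed to close this gap, and it is here that the extra factor of two you saved in the lower bound is spent. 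So your route is the more careful one; the paper's version handles the ``furthermore'' part but leaves the $(\eps,1/4B)$-dependence claim without the required upper-bound check.
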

\begin{proof}
By shifting $w$ to zero we can assume that both $u$ and $v$ have norm bounded by $B$.
By definition, there exists  $\alpha \in \C$ such that $\| w - \alpha u - (1-\alpha)v \| \leq \eps$ and so we only need to show that $|\alpha| \geq 1/4B$ (the same argument will apply to $1-\alpha$ by symmetry). Observe that
\begin{eqnarray*}
1 &\leq& \|w - v \| \\
&\leq& \|w - \alpha u - (1-\alpha) v \| + \|\alpha u\| + \|\alpha v\| \\
&\leq& \eps + 2\alpha B,
\end{eqnarray*}
which proves the lemma.
\end{proof}

\begin{lem}\label{lem-arcreverseaffine}
Let $\{u,v,w\} \in \C^d$ be $B$-balanced and let $0 < \eps \leq 1/2 $ be a real number such that $w \in \line_{\eps}(u,v)$. Then $v \in \line_{\eps'}(w,u)$ with $\eps' =  4\eps B$.
\end{lem}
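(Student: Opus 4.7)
The plan is to reparameterize the line through $u$ and $v$ as a line through $w$ and $u$ and to directly verify that the same approximate linear identity witnesses both containments. The starting point is Lemma~\ref{lem-triplecoefaffine}, which upgrades the geometric condition $w \in \line_\eps(u,v)$ into an algebraic one: there exists $\alpha \in \C$ with $|\alpha| \geq 1/(4B)$ such that
\[
\| w - \alpha u - (1-\alpha) v \| \leq \eps.
\]
Crucially, the proof of Lemma~\ref{lem-triplecoefaffine} is symmetric between $u$ and $v$, so the very same argument applied with $u$ and $v$ swapped shows $|1-\alpha| \geq 1/(4B)$ as well. I would state this explicitly at the start of the proof, since it is the only quantitative fact about $\alpha$ needed below.

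Next, I would produce the desired $\beta$ by rearranging the approximate identity. Since $|1-\alpha|$ is bounded away from $0$, I can divide by $1-\alpha$ and take
\[
\beta := \frac{1}{1-\alpha}, \qquad 1-\beta = -\frac{\alpha}{1-\alpha}.
\]
A one-line algebraic manipulation gives the identity
\[
v - \beta w - (1-\beta) u \;=\; -\frac{1}{1-\alpha}\bigl(w - \alpha u - (1-\alpha) v\bigr).
\]
Taking norms, this yields $\| v - \beta w - (1-\beta) u \| \leq \eps/|1-\alpha| \leq 4\eps B$, which by definition means $v \in \line_{4\eps B}(w,u)$, as required.

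There is essentially no obstacle here beyond bookkeeping; the only subtle point is ensuring that $1-\alpha$ cannot be too small, so that the division step does not blow up. That is precisely what the symmetric half of Lemma~\ref{lem-triplecoefaffine} delivers, and it uses the $B$-balanced hypothesis in the form $\|u - w\| \geq 1$ and $\|u\|, \|v\| \leq B$ (after shifting $w$ to the origin). The condition $\eps \leq 1/2$ is what guarantees that $(1-\eps)/(2B) \geq 1/(4B)$ in that argument, so it propagates correctly to our setting.
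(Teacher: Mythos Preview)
Your argument is correct and follows essentially the same approach as the paper's proof: invoke Lemma~\ref{lem-triplecoefaffine} to get the affine coefficient bounded away from zero, then divide through by that coefficient to exhibit $v$ as an approximate affine combination of $w$ and $u$. The only cosmetic difference is that the paper writes the identity as $\|w - \alpha v - (1-\alpha)u\| \leq \eps$ (so the coefficient of $v$ is named $\alpha$) and divides by $\alpha$, whereas you keep the labeling from Lemma~\ref{lem-triplecoefaffine} and divide by $1-\alpha$; in both cases the needed lower bound $1/(4B)$ is exactly the symmetric half of Lemma~\ref{lem-triplecoefaffine} that you correctly flag.
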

\begin{proof}
By Lemma~\ref{lem-triplecoefaffine} there exists a complex  $\alpha$ with $|\alpha| \geq 1/4B$ such that $$ \|w - \alpha v - (1-\alpha) u \| \leq \eps.$$ Then  $$ \|v - (1/\alpha) w + (1/\alpha-1) v\| \leq \eps/\alpha \leq 4\eps B.$$ This completes the proof.
\end{proof}

\begin{lem}\label{lem-arcdensityaffine}
Let $u,v \in \C^d$ be two distinct points. Let $k$ be the maximum size of a $B$-balanced set contained in $\line_\eps(u,v)$. If $\eps < 1/4$ then $k \leq 5B$.
\end{lem}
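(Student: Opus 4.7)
My plan is a projection-and-packing argument. Let $S\subset\line_\eps(u,v)$ be a $B$-balanced set of size $k$. For each $x\in S$, let $p(x)$ be its orthogonal projection onto the complex line $L=\line(u,v)$ and set $e(x)=x-p(x)$, so that $e(x)\perp L$ and $\|e(x)\|=\dist(x,L)\leq\eps$. By the Pythagorean theorem applied in $\C^d$ viewed as $\R^{2d}$, for any distinct $x,y\in S$,
$$\|x-y\|^2=\|p(x)-p(y)\|^2+\|e(x)-e(y)\|^2.$$
Combining this with the $B$-balanced hypothesis $1\leq\|x-y\|\leq B$ and the bound $\|e(x)-e(y)\|\leq 2\eps<1/2$, the projections inherit pairwise distances
$$\sqrt{3}/2\leq\sqrt{1-4\eps^2}\leq\|p(x)-p(y)\|\leq B.$$

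The complex line $L$ is isometric to $\C\cong\R^2$, so the $k$ projected points live in a Euclidean plane with minimum pairwise distance at least $\sqrt{3}/2$ and diameter at most $B$. I would then close with a standard area-packing argument: the open disks of radius $\sqrt{3}/4$ centred at the $p(x)$'s are pairwise disjoint and all lie inside a single disk of radius $B+\sqrt{3}/4$ (centred at any one of the projections), which gives
$$k\cdot\pi(\sqrt{3}/4)^2\leq\pi\bigl(B+\sqrt{3}/4\bigr)^2,$$
and hence $k=O(B^2)$.

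The main obstacle I expect is closing the gap between this $O(B^2)$ bound from generic two-dimensional packing and the linear-in-$B$ estimate $5B$ stated in the lemma. Since the projections genuinely inhabit a two-real-dimensional complex line, I do not see a purely geometric route to a linear bound in $B$; either there is an additional structural feature of $B$-balanced subsets of narrow tubes that effectively collapses the problem to one dimension (perhaps by picking a pair of diametrical projections and reducing to a segment-packing argument), or the claimed constant is simply loose. In any event, the weaker $O(B^2)$ bound is already comfortably sufficient for how the lemma is used in the proof of Theorem~\ref{thm-apxsgaffine}, whose dimension estimate is already polynomial in $B$.
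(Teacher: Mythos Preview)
Your approach is essentially the paper's: project each point of the $B$-balanced set to its nearest point on $\line(u,v)$, observe that the projections remain well separated (distance $\geq\sqrt{3}/2$) and confined to a region of diameter $\leq B$, and then pack. The only substantive difference is that you correctly treat the complex line $\line(u,v)=\{\alpha u+(1-\alpha)v:\alpha\in\C\}$ as a two-real-dimensional affine plane and run an area-packing argument, arriving at $k=O(B^2)$.

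The paper's proof instead asserts that the projected points ``are all on a line segment of length at most $2B$'' and applies a one-dimensional pigeonhole to get $k\leq 5B$. That step is a slip: because $\alpha$ ranges over $\C$, the projections live in a disc, not an interval. In fact the linear-in-$B$ bound is false over $\C$. Identify $\line(u,v)$ with $\C$ and take the integer lattice points inside the disc of radius $B/2$; this is a $B$-balanced set of size $\Theta(B^2)$ lying exactly on the line (so in $\line_\eps(u,v)$ for every $\eps\geq 0$). Hence your $O(B^2)$ is the right order of magnitude, and your suspicion that no purely geometric argument can reach $5B$ is well founded.

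As you note, this costs nothing for the application. In the proof of Theorem~\ref{thm-apxsgaffine} the lemma is only used to bound the design parameter $g$, which enters the conclusions of Theorems~\ref{thm-apxsggen} and~\ref{thm-apxsggenav} polynomially; replacing $g\leq 5B$ by $g=O(B^2)$ merely changes the (already non-sharp) powers of $B$ in the final dimension and $\eps'$ bounds.
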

\begin{proof}
Suppose $k > 5B$ and let $V = \{v_1,\ldots,v_k\}$ be a $B$-balanced set contained in $\line_\eps(u,v)$. For each $v_i$ let $u_i \in \line(u,v)$ be a point of distance at most $\eps$ from it. Since the $k$ points $u_1,\ldots,u_k$ are all on a line segment of length at most $2B$ we can apply a pigeon hole argument to conclude that there must be $i \neq j$ with $\dist(u_i,u_j) \leq 2B/(k-1)$. This implies $\dist(v_i,v_j) \leq 2\eps + 2B/(k-1) < 1$, a contradiction.
\end{proof}

\subsection*{Proof of Theorem~\ref{thm-apxsgaffine}}

We define $T \subset {[n] \choose 3}$ to be the set of triples $\{i,j,k\} \subset [n]$ (with three distinct indices) for which $v_k \in \line_\eps(v_i,v_j)$. By Lemma~\ref{lem-triplecoefaffine} we have that for each triple $\{i,j,k\}$ in $T$, the corresponding triple $v_i,v_j,v_k \in \C^d$ is $(\eps,1/4B)$-dependent.

\begin{claim}\label{cla-Tdesignaffine}
$T$ as defined above is a $(p,g)$ design with $p = \delta(n-1)$ and $g < 5B$.
\end{claim}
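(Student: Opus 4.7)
The plan is to verify the two conditions of a $(p,g)$-design separately, leveraging the balancedness assumption and the preliminary lemmas already established.

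For condition 1 (lower bound on $p$), I would use the hypothesis of Theorem~\ref{thm-apxsgaffine} directly. Fix $i \in [n]$. By assumption, there are at least $\delta(n-1)$ distinct values of $j \in [n] \setminus \{i\}$ for which some $k = k(j) \in [n] \setminus \{i,j\}$ satisfies $v_{k(j)} \in \line_\eps(v_i, v_j)$, so $\{i,j,k(j)\} \in T$. Each resulting triple contains $i$. The only subtlety is that two different values of $j$ might produce the same unordered triple (namely when $k(j_1) = j_2$ and $k(j_2) = j_1$), but this collision can occur at most pairwise, so at least $\delta(n-1)/2$ distinct triples of $T$ contain $i$. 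This gives $p \geq \delta(n-1)$ up to the constant factor hidden in the statement (and absorbed into the $O(\cdot)$ bounds of the main theorems).

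For condition 2 (upper bound on $g$), fix $i \neq j \in [n]$. Any triple $\{i,j,k\} \in T$ arises from one of three cases, according to which vertex plays the role of the ``third point'' in the line approximation: either $v_k \in \line_\eps(v_i, v_j)$, or $v_j \in \line_\eps(v_i, v_k)$, or $v_i \in \line_\eps(v_j, v_k)$. In the first case, $v_k \in \line_\eps(v_i,v_j) \subset \line_{4\eps B}(v_i,v_j)$. For the other two cases I would invoke Lemma~\ref{lem-arcreverseaffine} (whose hypothesis $\eps \leq 1/2$ is satisfied since $\eps < 1/16B \leq 1/16$): applying it with $(w,u,v) = (v_j, v_i, v_k)$ in the second case and $(w,u,v) = (v_i, v_j, v_k)$ in the third gives $v_k \in \line_{4\eps B}(v_i, v_j)$ in both cases. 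So in all three cases $v_k$ lies in the common tube $\line_{4\eps B}(v_i, v_j)$.

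It then remains to bound the number of points of the $B$-balanced set $V$ that can lie in this tube. Here I would invoke Lemma~\ref{lem-arcdensityaffine}, whose hypothesis requires the tube radius to be less than $1/4$. This is precisely where the scaling $\eps < 1/16B$ is used: it forces $4\eps B < 1/4$. The lemma then yields at most $5B$ such points, and since $v_i$ and $v_j$ are among them, the number of admissible $k$'s is strictly less than $5B$, proving $g < 5B$. The main obstacle throughout is managing the asymmetry of $\eps$-collinearity: the reverse direction costs a factor of $B$ in the radius, and the threshold $\eps < 1/16B$ is calibrated so that after this blowup the density lemma still applies.
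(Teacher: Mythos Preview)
Your proof is correct and follows essentially the same route as the paper's: the $g$-bound argument (three cases reduced via Lemma~\ref{lem-arcreverseaffine} to a common tube of radius $4\eps B$, then Lemma~\ref{lem-arcdensityaffine}) is identical. On the $p$-bound you are actually more careful than the paper, which simply asserts $p=\delta(n-1)$ without addressing the collision $k(j_1)=j_2,\ k(j_2)=j_1$; your observation that this costs at most a factor of $2$, absorbed into the downstream $O(\cdot)$ bounds, is the right fix.
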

\begin{proof}
By the conditions of the theorem, each $v_i$ is contained in at least $\delta(n-1)$ triples that are in $T$ and so the bound on $p$ holds. To prove the bound on $g$, fix $i \neq j \in [n]$. If the triple $\{i,j,k\}$ appears in $T$. Then either $v_k \in \line_\eps(v_i,v_j)$, $v_i \in \line_\eps(v_j,v_k)$ or $v_j \in \line_\eps(v_i,v_k)$. In all three cases, we have, using Lemma~\ref{lem-arcreverseaffine}, that $v_k \in \line_{\eps'}(v_i,v_j)$ with $\eps' = 4\eps B$. Since $\eps < 1/16B$ we have $\eps' < 1/4$ and we can apply Lemma~\ref{lem-arcdensityaffine} to conclude that there could be at most $5B$ such triples.
\end{proof}

Observe that we can discard some of the triples in $T$ so that $|T| \leq \delta n^2$ and so that $T$ is still a $(p,g)$-design (simply keep for each $i$ only $\delta (n-1)$ dependent triples).

Plugging the  bounds obtained in the above claims and the  bound $|T| \leq \delta n^2$ into Theorem~\ref{thm-apxsggen} we get a subspace $L$ with $\dim(L) \leq O(B^6/\delta^2)$ and such that $\dist(v_i,L) \leq O(\eps B^{2.5}/\sqrt{\delta})$ for all $i \in [n]$. The second part of the theorem follows from applying Theorem~\ref{thm-apxsggenav}.

\section{Proof of Theorem~\ref{thm-apxsg}}\label{sec-pfproj}

We first prove some preliminary lemmas.

\begin{lem}\label{lem-distance}
Suppose $u,v \in S^d$ are s.t $\min\{\dist(u,v),\dist(u,-v)\} = \mu$. Then, for all complex $\beta$, $\dist(u,\beta v) \geq \mu/4$. 
\end{lem}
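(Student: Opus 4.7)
The plan is to reduce the claim to an upper bound on $|\langle u, v\rangle|$ via orthogonal projection, and then extract that bound from the hypothesis.

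First I would set $c := \langle u, v \rangle$ and decompose $u = c v + u^{\perp}$ with $\langle u^{\perp}, v\rangle = 0$. Since $\|u\| = \|v\| = 1$, a direct expansion gives $\|u^{\perp}\|^2 = 1 - |c|^2$. Then for any $\beta \in \C$,
\begin{equation*}
\|u - \beta v\|^2 \;=\; |c - \beta|^2 \|v\|^2 + \|u^{\perp}\|^2 \;\geq\; 1 - |c|^2,
\end{equation*}
so $\dist(u, \beta v) \geq \sqrt{1 - |c|^2}$ uniformly in $\beta$. It therefore suffices to prove the scalar bound $|c|^2 \leq 1 - \mu^2/16$.

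Second, I would translate the hypothesis into bounds on $c$. The identities $\|u \pm v\|^2 = 2 \pm 2\,\mathrm{Re}(c)$, together with $\|u - v\|, \|u + v\| \geq \mu$, immediately give $|\mathrm{Re}(c)| \leq 1 - \mu^2/2$. Cauchy--Schwarz provides the free bound $|c| \leq 1$. When $c$ happens to be real these two facts instantly yield $|c|^2 \leq (1 - \mu^2/2)^2 \leq 1 - \mu^2$, which is far stronger than needed and closes the argument.

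The main obstacle I anticipate is upgrading the bound on $|\mathrm{Re}(c)|$ to a bound on $|c|$ in the genuinely complex case, since the $\pm v$ hypothesis does not directly constrain $\mathrm{Im}(c)$. I expect the full proof either (i) replaces $v$ by a phase rotation $e^{i\theta}v$ that aligns $c$ with the real axis while preserving the relevant distance hypothesis (so that one is reduced to the real case handled above), or (ii) argues directly from the unit-sphere geometry that the two inequalities $|\mathrm{Re}(c)| \leq 1 - \mu^2/2$ and $|c| \leq 1$ together with the $S^d$-constraint force $|c|^2 \leq 1 - \mu^2/16$ up to the crude constant $1/4$. In either case the generous factor of $1/4$ in the conclusion of the lemma should absorb the constants lost along the way, so I would verify whichever route goes through cleanly and then invoke the reduction above to finish.
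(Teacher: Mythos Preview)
Your reduction via orthogonal projection to the scalar inequality $|c|^2 \leq 1 - \mu^2/16$ is clean and correct, and you are right that the hypothesis only controls $\mathrm{Re}(c)$, not $|c|$. The paper's proof takes a slightly different route: assuming w.l.o.g.\ $\dist(u,v)=\mu$, it writes $\ip{u}{v} = 1 - \mu^2/2$, notes that $\dist(u,\gamma v)$ is minimized at $\gamma = \ip{u}{v}$, and then uses the triangle inequality $\|u - (1-\mu^2/2)v\| = \|(u-v) + (\mu^2/2)v\| \geq \mu - \mu^2/2 \geq \mu/4$. But this is precisely your ``real case'' done with a triangle inequality instead of Pythagoras: the step $\ip{u}{v} = 1 - \mu^2/2$ is only valid when $\ip{u}{v}$ is real, since in general $\|u-v\|^2 = 2 - 2\,\mathrm{Re}\,\ip{u}{v}$. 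So the paper glosses over exactly the obstacle you flagged.

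Unfortunately, neither of your proposed fixes can succeed, because the lemma is actually false over $\C$ as stated. Take $u = e_1$ and $v = e^{i\theta} e_1$ in $\C^{d+1}$ for small $\theta > 0$. Then $\dist(u,v) = |1 - e^{i\theta}| = 2\sin(\theta/2)$ and $\dist(u,-v) = 2\cos(\theta/2)$, so $\mu = 2\sin(\theta/2) > 0$; yet with $\beta = e^{-i\theta}$ we get $\beta v = u$ and hence $\dist(u,\beta v) = 0$. In particular no inequality of the form $|c|^2 \leq 1 - \mu^2/16$ can hold, and your fix (i) fails because the phase rotation that makes $c$ real destroys, rather than preserves, the distance hypothesis. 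The statement (and the paper's argument) goes through if $\ip{u}{v}$ happens to be real --- e.g.\ over $\R$ --- or if the hypothesis is strengthened to $\min_{|\lambda|=1}\dist(u,\lambda v)\geq \mu$, which is the natural projective separation condition and is what the downstream lemmas morally require.
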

\begin{proof}
Suppose w.l.o.g $\dist(u,v) = \mu \leq \sqrt{2}$. We have $$ \mu = \sqrt{ \ip{u-v}{{u-v}}} = \sqrt{ 2 - 2\ip{u}{ v}},$$ which gives 
$ \ip{u}{ v} = 1 - \mu^2/2$. Since $\dist(u, \gamma v)$ is minimized for $\gamma = \ip{u}{ v}$ we have $\dist(u,\beta v) \geq \dist(u,(1-\mu^2/2)v) = || u -v + (\mu^2/2)v|| \geq ||u-v|| - ||(\mu^2/2)v|| \geq \mu - \mu^2/2 \geq \mu/4$ (for $\mu \leq \sqrt{2}$). 
\end{proof}

\begin{lem}\label{lem-triplecoef1}
Let $u,v,w \in S^d$ be distinct and let $\eps,\mu>0$ be real numbers s.t $\eps < \mu/8$. Suppose $\|w - \alpha u - \beta v \| \leq \eps$ for some complex numbers $\alpha,\beta$. If $\min\{ \dist(w,v),\dist(w,-v) \} \geq \mu$ then $|\alpha| > \mu/8$. 
\end{lem}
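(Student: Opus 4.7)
The proof will be a short argument by contradiction, using Lemma~\ref{lem-distance} together with the triangle inequality. The plan is to suppose $|\alpha| \leq \mu/8$ and derive a contradiction from the hypothesis that $w$ is separated from the complex line through $v$.

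Concretely, first I would assume $|\alpha| \leq \mu/8$. Since $u \in S^d$ has unit norm, this gives $\|\alpha u\| \leq \mu/8$. Applying the triangle inequality to the given approximation $\|w - \alpha u - \beta v\| \leq \eps$, I obtain
\[
\|w - \beta v\| \;\leq\; \|w - \alpha u - \beta v\| + \|\alpha u\| \;\leq\; \eps + \mu/8 \;<\; \mu/8 + \mu/8 \;=\; \mu/4,
\]
using the hypothesis $\eps < \mu/8$.

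Next I would invoke Lemma~\ref{lem-distance} applied to the pair $(w,v)$. The hypothesis $\min\{\dist(w,v),\dist(w,-v)\} \geq \mu$ together with that lemma yields $\dist(w, \beta v) \geq \mu/4$ for every complex $\beta$ (the lemma is stated when the minimum equals $\mu$, but monotonicity in $\mu$ extends it to the case where the minimum is at least $\mu$). This directly contradicts the bound $\|w - \beta v\| < \mu/4$ derived above, so the assumption $|\alpha| \leq \mu/8$ must fail, giving $|\alpha| > \mu/8$ as claimed.

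There is essentially no obstacle here; the only minor point is the invocation of Lemma~\ref{lem-distance} in the ``$\geq$'' rather than ``$=$'' form, which is immediate by applying the lemma to the actual minimum value $\mu' \geq \mu$ and noting $\mu'/4 \geq \mu/4$. The symmetric statement for $|\beta|$ (if desired elsewhere) follows by swapping the roles of $u$ and $v$ in the argument.
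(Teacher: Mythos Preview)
Your proof is correct and follows essentially the same approach as the paper: the paper also combines the triangle inequality with Lemma~\ref{lem-distance} to bound $|\alpha|$ from below by $\mu/4 - \eps > \mu/8$. The only cosmetic difference is that the paper phrases the argument directly rather than by contradiction.
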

\begin{proof}
By the triangle inequality $$ \|w - \beta v \| \leq \| \alpha u\| + \eps = |\alpha| + \eps. $$ Using Lemma~\ref{lem-distance} we have $\dist(w,\beta v) \geq \mu/4$ which gives $|\alpha| \geq \mu/4 - \eps \geq \mu/8.$
\end{proof}

\begin{lem}\label{lem-triplecoef}
Let $u,v,w \in S^d$ be $\mu$-separated and suppose $\eps < \mu/8$. Suppose $w \in \arc_\eps(u,v)$. Then, there exist complex numbers $\alpha,\beta,\gamma$ with $\| \alpha u + \beta v + \gamma w \| \leq \eps$ and s.t $\mu/8 \leq |\alpha|,|\beta|,|\gamma| \leq 1$. 
\end{lem}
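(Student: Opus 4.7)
The plan is to extract an explicit candidate combination from the definition of $\arc_\eps(u,v)$, bound its coefficients using Lemmas~\ref{lem-distance} and~\ref{lem-triplecoef1}, and then rescale if necessary. First, $w \in \arc_\eps(u,v)$ gives a unit vector $z \in \span\{u,v\} \cap S^d$ with $\|w - z\| \leq \eps$. Writing $z = au + bv$ (in the generic case where $u,v$ are linearly independent; the degenerate case $v = \lambda u$ is handled separately by picking $a, b$ of equal magnitude $1/2$), we obtain the candidate relation $\|(-a)u + (-b)v + 1\cdot w\| \leq \eps$, with the $w$-coefficient already of magnitude $1$.

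Next, for the lower bounds on $|a|, |b|$, I would apply Lemma~\ref{lem-triplecoef1} in both of its symmetric forms. Since $V$ is $\mu$-separated, both $\min\{\dist(w,v),\dist(w,-v)\} \geq \mu$ and $\min\{\dist(w,u),\dist(w,-u)\} \geq \mu$ hold, and $\eps < \mu/8$ by hypothesis, so the lemma yields $|a|, |b| > \mu/8$.

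For the upper bound, I would apply Lemma~\ref{lem-distance} to the pair $(u,v)$. Writing $\rho = \ip{u}{v}$, we have $\sqrt{1-|\rho|^2} = \min_{\beta \in \C}\|u - \beta v\| \geq \mu/4$, so $|\rho|^2 \leq 1 - \mu^2/16$. Since $1 - |\rho|^2 \leq 2(1-|\rho|)$, the smaller eigenvalue of the $2\times 2$ Gram matrix of $\{u,v\}$ satisfies $1 - |\rho| \geq \mu^2/32$. The identity $1 = \|z\|^2 = \|au + bv\|^2 \geq (1-|\rho|)(|a|^2 + |b|^2)$ then forces $|a|, |b| \leq \sqrt{32}/\mu < 6/\mu$.

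If $|a|, |b| \leq 1$, the triple $(\alpha,\beta,\gamma) = (-a,-b,1)$ satisfies the conclusion directly. Otherwise, I would rescale by $M = \max(1,|a|,|b|) \in [1, 6/\mu)$, taking $(\alpha,\beta,\gamma) = (-a/M, -b/M, 1/M)$; this preserves $\|\alpha u + \beta v + \gamma w\| \leq \eps/M \leq \eps$ and enforces all magnitudes $\leq 1$. The main obstacle is verifying the lower bound $\mu/8$ on each rescaled coefficient: combining $|a|, |b| > \mu/8$ with $M < 6/\mu$ shows each magnitude is bounded below by a constant-factor multiple of $\mu$, so hitting the precise constant $\mu/8$ in the statement comes down to careful bookkeeping of the numeric factors carried through from Lemmas~\ref{lem-distance} and~\ref{lem-triplecoef1}.
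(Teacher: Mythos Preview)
Your plan has a genuine gap in the final rescaling step, and it is not merely a matter of bookkeeping. After dividing by $M = \max(1,|a|,|b|)$, the coefficient that attains the maximum becomes $1$ and $|1/M| > \mu/6$ is fine, but for the \emph{other} coefficient among $a,b$ you only know $|a| > \mu/8$ and $M < 6/\mu$, which gives $|a|/M > \mu^2/48$. This is quadratic in $\mu$, not a ``constant-factor multiple of $\mu$'' as you claim, so no amount of tightening the constants in Lemmas~\ref{lem-distance} and~\ref{lem-triplecoef1} will recover the bound $\mu/8$.

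The fix, and this is exactly what the paper does, is to apply Lemma~\ref{lem-triplecoef1} \emph{after} rescaling rather than before. Suppose $|a| = \max(|a|,|b|) > 1$ and divide through by $a$ to obtain $\|(1/a)w - u - (b/a)v\| \leq \eps/|a| < \eps$. Now the coefficient of $u$ has magnitude exactly $1$, and the other two have magnitude at most $1$. Viewing this as an approximation of $u$ by a combination of $w$ and $v$, Lemma~\ref{lem-triplecoef1} applied with $\mu$-separatedness of $u$ from $v$ and from $w$ gives $|1/a| > \mu/8$ and $|b/a| > \mu/8$ directly. With this ordering of steps, your Gram-matrix upper bound on $|a|,|b|$ becomes unnecessary: you never need to know how large $M$ is, only that dividing by it makes all coefficients at most $1$ and leaves the inequality $\leq \eps$ intact.
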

\begin{proof}
By the assumption, there are $\alpha',\beta'$ with $\|w - \alpha' u - \beta' v\| \leq \eps$. If $|\alpha'|$ and $|\beta'|$ are at most 1 then we are done using Lemma~\ref{lem-triplecoef1}. If not, suppose $|\alpha'| = \max\{|\alpha'|,|\beta'|\} > 1$ and divide the equation by $\alpha'$ to obtain $\|(1/\alpha')w - u - (\beta'/\alpha')v\| \leq \eps/|\alpha'| < \eps$. Now, all three coefficients are at most 1 in absolute value and, using Lemma~\ref{lem-triplecoef1}, we have the lower bound $\mu/8$ on $|1/\alpha'|, |\beta'/\alpha'|$.
\end{proof}

\begin{lem}\label{lem-arcreverse}
Let $u,v,w \in S^d$ be distinct. Let $\eps,\mu > 0$ be real numbers such that $\eps < \mu/8$. Suppose $w \in \arc_{\eps}(u,v)$ and $\min\{\dist(w,v),\dist(w,-v)\} \geq \mu$. Then $u \in \arc_{\eps'}(w,v)$ with $\eps' =  8\eps/\mu$.
\end{lem}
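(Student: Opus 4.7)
The plan is to unpack the definition of $\arc_\eps$, invoke Lemma~\ref{lem-triplecoef1} to get a lower bound on the coefficient of $u$ in the approximating combination, and then invert the relation to express $u$ approximately as a combination of $w$ and $v$.

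First, I would unfold the hypothesis $w\in \arc_\eps(u,v)$ to obtain complex numbers $\alpha,\beta$ with $\alpha u+\beta v\in S^d$ and $\|w-\alpha u-\beta v\|\le \eps$. The key quantitative input is Lemma~\ref{lem-triplecoef1}: the hypothesis $\min\{\dist(w,v),\dist(w,-v)\}\ge\mu$ is exactly the separation assumption the lemma needs to give a lower bound on the coefficient of $u$, namely $|\alpha|\ge \mu/8$. (Note that Lemma~\ref{lem-triplecoef1} does not require $\alpha u+\beta v\in S^d$; it works for any such approximating combination.)

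Next, I would divide the approximate relation by $\alpha$ to invert it: setting $\tilde z := (1/\alpha)w-(\beta/\alpha)v\in\span(w,v)$, we get
\[
\|u-\tilde z\|=\tfrac{1}{|\alpha|}\|w-\alpha u-\beta v\|\le \tfrac{\eps}{|\alpha|}\le \tfrac{8\eps}{\mu}.
\]
This already shows $\dist(u,\span(w,v))\le 8\eps/\mu$.

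Finally, since we need $\dist(u,\arc(w,v))\le 8\eps/\mu$ rather than distance to $\span(w,v)$, I would renormalize: replacing $\tilde z$ by the orthogonal projection $Pu\in\span(w,v)$ of $u$ can only decrease the distance, and because $u\in S^d$ we have $\|Pu\|\ge\sqrt{1-(8\eps/\mu)^2}$, which under the hypothesis $\eps<\mu/8$ is close to $1$. A short standard calculation ($\|u-Pu/\|Pu\|\|^2 = 2-2\|Pu\|$) shows the unit vector $Pu/\|Pu\|\in \arc(w,v)$ is at distance essentially equal to $\|u-Pu\|$ from $u$, giving $\dist(u,\arc(w,v))\le 8\eps/\mu$, i.e., $u\in \arc_{\eps'}(w,v)$. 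The one place requiring care is this final normalization step, but the smallness hypothesis $\eps<\mu/8$ makes the normalization loss negligible, so the bound $\eps'=8\eps/\mu$ absorbs it.
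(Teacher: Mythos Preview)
Your approach is essentially identical to the paper's: obtain $\alpha,\beta$ from the definition of $\arc_\eps$, invoke Lemma~\ref{lem-triplecoef1} to get $|\alpha|>\mu/8$, and divide through by $\alpha$ to express $u$ approximately in $\span(w,v)$. The paper's own proof stops right there, concluding $u\in\arc_{\eps'}(w,v)$ directly from $\|u-(1/\alpha)w+(\beta/\alpha)v\|\le 8\eps/\mu$ without any normalization step; you are actually being more careful than the paper by noting that one must pass from $\span(w,v)$ to $\arc(w,v)=\span(w,v)\cap S^d$. Your computation $\|u-Pu/\|Pu\|\|^2=2-2\|Pu\|$ is correct, but strictly speaking it yields $\dist(u,\arc(w,v))\le\sqrt{2}\cdot\dist(u,\span(w,v))$ in the worst case, so the exact constant $8$ is not literally preserved---this is a harmless imprecision shared by (and in fact glossed over entirely in) the paper, since the lemma is only used inside big-$O$ bounds downstream.
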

\begin{proof}
By our assumption, there exist complex numbers $\alpha,\beta$ such that $$ \|w - \alpha u - \beta v \| \leq \eps.$$ By Lemma~\ref{lem-triplecoef1} we have $|\alpha| > \mu/8$ and so  $$ \|u - (1/\alpha) w + (\beta/\alpha) v\| \leq 8\eps/\mu.$$ This implies $u \in \arc_{\eps'}(w,v)$ as was required.
\end{proof}

\begin{lem}\label{lem-arcdensity}
Let $u,v \in S^d$ be two distinct points. Let $k$ be the maximum size of a $\mu$-separated set contained in $\arc_\eps(u,v)$. If $\eps < \mu/4$ then $k \leq 8/\mu$.
\end{lem}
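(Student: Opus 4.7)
The plan is to mirror the argument of the affine analogue, Lemma~\ref{lem-arcdensityaffine}. Suppose for contradiction that $k > 8/\mu$, and let $v_1,\ldots,v_k$ be a $\mu$-separated set inside $\arc_\eps(u,v)$. For each $v_i$, choose a point $u_i \in \arc(u,v)$ with $\|v_i - u_i\| \leq \eps$; such a $u_i$ exists by the definition of $\arc_\eps(u,v)$.

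The projected points $u_1,\ldots,u_k$ all lie on $\arc(u,v) = \span\{u,v\} \cap S^d$. Using a natural angular parametrization of the arc, and working modulo the $\pm 1$ identification built into $\mu$-separation, the arc has total length at most $\pi$. Since $k > 8/\mu > 2\pi/\mu$, a standard pigeonhole argument on this parametrization produces indices $i \neq j$ for which $\min(\|u_i - u_j\|, \|u_i + u_j\|) < \mu/2$.

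Combining this with the triangle inequality and the bounds $\|v_i - u_i\|, \|v_j - u_j\| \leq \eps$, one obtains $\min(\|v_i - v_j\|, \|v_i + v_j\|) \leq \min(\|u_i - u_j\|, \|u_i + u_j\|) + 2\eps < \mu/2 + 2\eps$, which is strictly less than $\mu$ by the hypothesis $\eps < \mu/4$. This contradicts the $\mu$-separation of $\{v_1,\ldots,v_k\}$, yielding the desired upper bound $k \leq 8/\mu$.

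The main obstacle is setting up the one-dimensional pigeonhole cleanly on $\arc(u,v)$: as a real object, the arc is the unit sphere in the complex plane $\span\{u,v\}$, i.e., strictly speaking a 3-sphere rather than a circle. This is handled exactly as in the affine proof, where points on a complex line are controlled via an associated real 1-parameter projection whose length is bounded by the diameter of the configuration (here, the diameter of the unit sphere modulo $\pm 1$, bounded by $\pi$). Given $\eps$ and $\mu$ in the stated range, this reduction is sufficient to close the argument.
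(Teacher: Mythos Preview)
Your argument mirrors the paper's proof step for step: assume $k > 8/\mu$, project each $v_i$ to a nearest $u_i \in \arc(u,v)$, pigeonhole on the arc to find $i\neq j$ with $\min(\|u_i-u_j\|,\|u_i+u_j\|)\le \pi/k < \mu/2$, then use the triangle inequality and $\eps<\mu/4$ to contradict $\mu$-separation of the $v_i$. The paper asserts exactly this pigeonhole step without further justification.

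You go further than the paper by flagging a genuine subtlety: over $\C$, the set $\arc(u,v)=\span\{u,v\}\cap S^d$ is the unit sphere of a complex $2$-plane, hence a real $3$-sphere, not a circle, so a one-dimensional pigeonhole is not immediately available. However, your proposed fix, an ``associated real $1$-parameter projection whose length is bounded by $\pi$,'' is never made concrete, and there is no evident distance-nonincreasing map from $S^3/\{\pm1\}$ onto an interval of length $\pi$ that would deliver the bound $\pi/k$. The affine Lemma~\ref{lem-arcdensityaffine} you invoke has the same character (a complex line is a real $2$-plane, not a segment), so it does not supply the missing reduction either. A packing/volume argument on $S^3/\{\pm1\}$ does give a valid pigeonhole, but only $k=O(1/\mu^3)$ rather than $k\le 8/\mu$; that weaker bound would still feed through Claim~\ref{cla-Tdesignsphere}, at the cost of worse exponents in Theorem~\ref{thm-apxsg}. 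As written, then, your proof reproduces the paper's argument and correctly isolates the one step that neither version fully justifies.
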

\begin{proof}
Suppose $k > 8/\mu$ and let $V = \{v_1,\ldots,v_k\}$ be a $\mu$-separated set contained in $\arc_\eps(u,v)$. For each $v_i$ let $u_i \in \arc(u,v)$ be a point of distance at most $\eps$ from it. By a pigeon hole argument, there must be $i \neq j$ with $\min\{\dist(u_i,u_j),\dist(u_i,-u_j)\} \leq \pi/k \leq \mu/2$. This implies $\min\{\dist(v_i,v_j),\dist(v_i,-v_j)\} \leq 2\eps + \mu/2 < \mu$, a contradiction.
\end{proof}

\begin{proof}[Proof of Theorem~\ref{thm-apxsg}]
	
	To reduce to Theorem~\ref{thm-apxsggen} we will define $T \subset {[n] \choose 3}$ to be the set of triples $\{i,j,k\} \subset [n]$ for which $v_k \in \arc_\eps(v_i,v_j)$. 

	\begin{claim}\label{cla-Tdepsphere}
	Let $\{i,j,k\} \in T$. Then the triple $v_i,v_j,v_k \in \C^d$ is $(\eps,\mu/8)$-dependent.
	\end{claim}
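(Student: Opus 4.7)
The plan is essentially to invoke Lemma~\ref{lem-triplecoef} directly, so the task reduces to checking that its hypotheses are met for the triple $v_i,v_j,v_k$ specified by an element $\{i,j,k\} \in T$.

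First, I would note that the triple $\{v_i,v_j,v_k\}$ inherits $\mu$-separation from $V$, since $V$ is assumed to be $\mu$-separated in Theorem~\ref{thm-apxsg}. Next, by the way $T$ was defined just before the claim, the membership $\{i,j,k\} \in T$ means precisely $v_k \in \arc_\eps(v_i,v_j)$, so the approximate-collinearity hypothesis of Lemma~\ref{lem-triplecoef} is satisfied.

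The only remaining point is the numerical hypothesis $\eps < \mu/8$ required by Lemma~\ref{lem-triplecoef}. Here we use the hypothesis $\eps < \mu^2/32$ of Theorem~\ref{thm-apxsg}, together with the elementary bound $\mu \le \sqrt 2$ (which follows because for any two unit vectors $u,v \in S^d$ one has $\dist(u,v)^2 + \dist(u,-v)^2 = 4$, so $\min\{\dist(u,v),\dist(u,-v)\} \le \sqrt 2$). In particular $\mu < 4$, so $\mu^2/32 < \mu/8$ and thus $\eps < \mu/8$.

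With all three hypotheses verified, Lemma~\ref{lem-triplecoef} yields complex numbers $\alpha,\beta,\gamma$ with $\mu/8 \le |\alpha|,|\beta|,|\gamma| \le 1$ and $\|\alpha v_i + \beta v_j + \gamma v_k\| \le \eps$, which is exactly the statement that the triple $v_i,v_j,v_k$ is $(\eps,\mu/8)$-dependent. There is no real obstacle here; the claim is a direct translation of the arc-based approximate-collinearity hypothesis into the algebraic $(\eps,\mu')$-dependence format needed to feed into Theorem~\ref{thm-apxsggen}.
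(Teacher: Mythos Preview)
Your proposal is correct and follows exactly the paper's approach: the paper's own proof is simply ``This is immediate from Lemma~\ref{lem-triplecoef},'' and you have spelled out the routine verification of that lemma's hypotheses (including the observation that $\eps < \mu^2/32 \le \mu/8$ since $\mu \le \sqrt{2}$). There is nothing to add.
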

	\begin{proof}
	This is immediate from Lemma~\ref{lem-triplecoef}.
	\end{proof}

	\begin{claim}\label{cla-Tdesignsphere}
	$T$ as defined above is a $(p,g)$ design with $p = \delta(n-1)$ and $g < 8/\mu$.
	\end{claim}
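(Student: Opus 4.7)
The plan is to mirror the structure of Claim~\ref{cla-Tdesignaffine}, but using the projective analogs of the auxiliary lemmas (Lemma~\ref{lem-arcreverse} and Lemma~\ref{lem-arcdensity}) in place of their affine counterparts. The bound on $p$ is essentially immediate from the hypothesis of Theorem~\ref{thm-apxsg}: each index $i \in [n]$ participates, by assumption, in at least $\delta(n-1)$ triples $\{i,j,k\}$ such that $v_k \in \arc_\eps(v_i,v_j)$, so it participates in at least $\delta(n-1)$ elements of $T$.

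For the bound on $g$, I would fix distinct $i,j \in [n]$ and enumerate the triples in $T$ that contain both. Any such triple $\{i,j,k\} \in T$ is, by the definition of $T$, witnessed by one of the three conditions $v_k \in \arc_\eps(v_i,v_j)$, $v_i \in \arc_\eps(v_j,v_k)$, or $v_j \in \arc_\eps(v_i,v_k)$. In the latter two cases I would apply Lemma~\ref{lem-arcreverse} to convert the witness into a statement of the form $v_k \in \arc_{\eps'}(v_i,v_j)$, where $\eps' = 8\eps/\mu$; the hypothesis of Lemma~\ref{lem-arcreverse} requires $\eps < \mu/8$ and the $\mu$-separatedness of the involved points, both of which are at hand (note $\mu \leq \sqrt{2}$ for points on $S^d$, so $\eps < \mu^2/32 < \mu/8$). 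This reduces all three cases uniformly to the statement ``$v_k \in \arc_{\eps'}(v_i,v_j)$ for some $k$.''

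Having done the reduction, the number of such $k$ is the number of points of $V \setminus \{v_i,v_j\}$ lying in $\arc_{\eps'}(v_i,v_j)$. Since $V$ is $\mu$-separated, this set is $\mu$-separated, so I would apply Lemma~\ref{lem-arcdensity} to bound its cardinality by $8/\mu$. The hypothesis $\eps' < \mu/4$ of that lemma translates into $8\eps/\mu < \mu/4$, i.e., $\eps < \mu^2/32$, which is precisely the condition assumed in Theorem~\ref{thm-apxsg}. Thus $g < 8/\mu$ as claimed.

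The only real subtlety, and the main thing I would want to verify carefully, is the compatibility of the various $\eps$ thresholds: $\eps < \mu^2/32$ has been chosen precisely so that both Lemma~\ref{lem-arcreverse} (needing $\eps < \mu/8$) and Lemma~\ref{lem-arcdensity} applied to $\eps' = 8\eps/\mu$ (needing $\eps' < \mu/4$) go through simultaneously. Once this sanity check is in place, no further machinery is required, and the proof is just a bookkeeping of the three symmetric cases via Lemma~\ref{lem-arcreverse} followed by the packing estimate of Lemma~\ref{lem-arcdensity}.
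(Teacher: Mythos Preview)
Your proposal is correct and follows essentially the same approach as the paper's own proof: establish the $p$-bound directly from the hypothesis, then for the $g$-bound fix $i\neq j$, reduce all three witness cases to $v_k \in \arc_{\eps'}(v_i,v_j)$ via Lemma~\ref{lem-arcreverse} with $\eps' = 8\eps/\mu$, and finish with the packing bound of Lemma~\ref{lem-arcdensity}. Your explicit verification of the threshold compatibility ($\eps < \mu^2/32 \Rightarrow \eps < \mu/8$ and $\eps' < \mu/4$) matches the paper's reasoning exactly.
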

	\begin{proof}
	By the conditions of the theorem, each $v_i$ is contained in at least $\delta(n-1)$ triples that are in $T$ and so the bound on $p$ holds. To prove the bound on $g$, fix $i \neq j \in [n]$. If the triple $\{i,j,k\}$ appears in $T$, then either $v_k \in \arc_\eps(v_i,v_j)$, $v_i \in \arc_\eps(v_j,v_k)$ or $v_j \in \arc_\eps(v_i,v_k)$. In all three cases, we have, using Lemma~\ref{lem-arcreverse}, that $v_k \in \arc_{\eps'}(v_i,v_j)$ with $\eps' = 8\eps/\mu$. Since $\eps < \mu^2/32$ we have $\eps' < \mu/4$ and we can apply Lemma~\ref{lem-arcdensity} to conclude that there could be at most $8/\mu$ such triples.
	\end{proof}

	Plugging the  bounds obtained in the above claims and the  bound $|T| \leq \delta n^2$ (which can be obtained by discarding some of the triples in $T$, as before) into Theorem~\ref{thm-apxsggen} and into Theorem~\ref{thm-apxsggenav} completes the proof.
\end{proof}

\section{Proof of Theorem~\ref{thm-apxlcc}}\label{sec-apxlcc}

Since the proof follows the same lines as the proof of Theorem~\ref{thm-apxsggen}, we will assume  familiarity with the proof of that theorem and only give details where the proofs differ.

We will use the following definition:

\begin{define}[LCC-matrix]\label{def-lccmat}	
Let $M$ be an $nk \times n$ matrix over $\C$ and let $M_1,\ldots,M_n$ be $k \times n$ matrices so that $M$ is the concatenation of the blocks $M_1,\ldots,M_n$ placed on top of each other (so $M_\ell$ contains the rows of $M$ numbered $k(\ell-1) + 1, \ldots,k \ell$). We say that $M$ is a $(k,q)$-LCC matrix if, for each $i \in [n]$ the block $M_i$ satisfies the following conditions:
\begin{itemize}
	\item Each row of $M_i$ has support size at most $q+1$.
	\item All rows in $M_i$ have the value $1$ in position $i$.
	\item The supports of two distinct rows in $M_i$ intersect only in position $i$.
\end{itemize}
\end{define}

Let $V = \{v_1,\ldots,v_n\} \subset \C^d$ be a $(q,\delta,B,\eps)$-stable LCC and assume w.l.o.g that $d=n$ (that is, pad the vectors $v_i$ with zeros so that we can think of them as vectors in $\C^n$). Let $A$ be the $n \times n$ matrix with rows $v_i$. 

\begin{claim}\label{cla-lccM}
There exists a $(k,q)$-LCC matrix $M$ with dimensions $nk \times n$ and with $k = \Omega(\delta n/q)$ such that all entries of $M$ have absolute values at most $B$ and such that $$ ||M A ||^2 \leq n^2 \eps^2.$$
\end{claim}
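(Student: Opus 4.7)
The plan is to construct $M$ block-by-block: for each $i \in [n]$, iterate the stable LCC property to produce $k = \Omega(\delta n/q)$ pairwise disjoint recovery sets for $v_i$ (disjoint apart from $i$ itself), and turn each such set into one row of the block $M_i$. The three structural conditions in Definition~\ref{def-lccmat} will then hold by construction, and the Frobenius bound will follow by summing row norms.

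To carry out the iteration, fix $i \in [n]$ and initialize $S_1 = \{i\}$. Suppose we have produced disjoint $J_1, \ldots, J_{\ell-1} \subset [n] \setminus \{i\}$, each of size at most $q$; let $S_\ell = \{i\} \cup J_1 \cup \cdots \cup J_{\ell-1}$, so that $|S_\ell| \leq 1 + (\ell-1)q$. While this is at most $\delta n$, the stable LCC hypothesis applied with ``bad'' set $S_\ell$ yields $J_\ell \subset [n] \setminus S_\ell$ of size $\leq q$ together with coefficients $\{a_{\ell, j}\}_{j \in J_\ell} \subset \C$ with $|a_{\ell, j}| \leq B$ and $\|v_i - \sum_{j \in J_\ell} a_{\ell, j} v_j\| \leq \eps$. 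This procedure survives for $k = \Omega(\delta n/q)$ steps. Define the $\ell$-th row of the block $M_i$ to have value $1$ in position $i$, value $-a_{\ell, j}$ in each position $j \in J_\ell$, and $0$ elsewhere.

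Verifying the claim is now essentially bookkeeping. Each row has support $\{i\} \cup J_\ell$ of size at most $q+1$ with value $1$ at position $i$; the pairwise disjointness of the $J_\ell$'s ensures that any two distinct rows of $M_i$ share only position $i$; and (taking $B \geq 1$ without loss of generality) every entry has modulus at most $B$. For the norm bound, multiplying row $\ell$ of $M_i$ by $A$ yields exactly $v_i - \sum_{j \in J_\ell} a_{\ell, j} v_j$, of norm at most $\eps$, so summing the squared norms of the $nk$ rows of $M$ gives $\|MA\|^2 \leq nk\eps^2 \leq n^2 \eps^2$ (using $k \leq n$, which holds since $\delta \leq 1 \leq q$). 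The only subtle point is keeping the three LCC-matrix conditions simultaneously satisfied across iterations, which is precisely why we both include $i$ in every $S_\ell$ (forcing $i \notin J_\ell$) and accumulate the previously-used $J_{\ell'}$'s into $S_\ell$ (forcing the $J_\ell$'s to be pairwise disjoint).
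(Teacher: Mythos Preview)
Your proof is correct and follows essentially the same approach as the paper: both construct each block $M_i$ by repeatedly invoking the stable LCC property while avoiding previously used indices, and both fill each row with a $1$ in position $i$ and the negated bounded coefficients in the recovery set. The paper simply asserts the existence of a family of $k = \Omega(\delta n/q)$ disjoint $q$-tuples, whereas you spell out the greedy iteration explicitly; your added remark that $B \ge 1$ may be assumed without loss of generality (so that the diagonal entry $1$ also obeys the bound) patches a minor imprecision that the paper leaves implicit.
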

\begin{proof}
We will show how to construct the $k \times n$ block $M_i$ of $M$ (see Definition~\ref{def-lccmat}) row by row. Using the definition of stable LCC, there exists a family $Q_i$ of $k = \Omega(\delta n/q)$ disjoint $q$-tuples of elements of $V$ such  that, for each $q$-tuple $J \in Q_i$, we have $\dist(v_i,\span_B(J)) \leq \eps$. Each of these $q$-tuples, $J$, defines a row vector $w_J$ with $1$ in the $i$'th position, $B$-bounded entries in positions indexed by $J$, and zeros everywhere else in the following manner: Suppose $v_i = \sum_{j \in J}b_jv_j + e$ with $|b_j| \leq B$ for all $j \in J$ and $||e|| \leq \eps$. Then we define $w_j$ to have $1$ in position $i$ and values $-b_j$ in positions $j \in J$ (with zeros in all other positions).  Then, we have $||w_J A|| = ||e|| \leq \eps$. Taking all these row vectors to construct $M_i$ we get the required bound on $||MA||^2$.
\end{proof}

Let $E = MA$ so that $||E||^2 \leq n^2 \eps^2$. We now construct another $nk \times n$ matrix $R$ so that $R^T M$ will be diagonal dominant. $R$ will be comprised of $n$ blocks, $R_1,\ldots,R_n$, each of dimensions $k \times n$ so that $R_i$ has $1$'s in the $i$'th column and zeros everywhere else. Notice that, the $i$'th row of $R^T M$ is the sum of the rows in the block $M_i$ of $M$.

Let $\hat M = R^TM$ and $\hat E = R^TE$ so that $\hat E = \hat M A$. An application of the  Cauchy-Schwarz inequality shows that $$|| R^TE ||^2 \leq n ||E||^2 \leq n^3 \eps^2.$$ Observe that the diagonal elements of $\hat M$ are all equal to $k$ and that the off-diagonal elements of $\hat M$ are all of absolute value at most $B$ (since the supports of rows in $M_i$ are disjoint except for the $i$'th coordinate).
 
We proceed with analyzing the spectrum of $\hat M$. Let $r_1,\ldots,r_n$ be the right singular vectors and $\sigma_1,\ldots,\sigma_n$ the corresponding singular values. If we take $X =\hat M^* \hat M$ then the diagonal elements of $X$ are all at least $K^2 \geq k^2$ and the off diagonal elements can be bounded by $2kB + nB^2 \leq O(nB^2)$. If we define $$ L = \span\{ r_j \,|\, \sigma_j < K/2\}$$ we get that, using Lemma~\ref{lem-pert}, $$\dim(L) \leq O(n^4 B^4/K^4) = O((qB/\delta)^4).$$ 

As in the proof of Theorem~\ref{thm-apxsggen}, we consider the columns $u_1,\ldots,u_n$ of $A$ and obtain the bound
$$ \sum_{j=1}^n \dist(u_j,L)^2 \leq 4||\hat E||^2/K^2 =  O(n^3 \eps^2/K^2). $$ This means that there is a subspace $L'$ with the same dimension as $L$ such that
$$ \sum_{i=1}^n \dist(v_j,L')^2 \leq  O(n^3 \eps^2/K^2). $$
Thus, there is a set $V' \subset V$  of size $n' \geq (1 - \delta/2)n$ such that for all $ v' \in V'$ we have $\dist(v', L')^2 \leq O(n^2 \eps^2/\delta K^2) = O(q^2 \eps^2/\delta^3)$. To finish the proof we observe that, using the definition of a stable LCC, for every $v \in V$ there is a $q$-tuple $ J \subset V'$ with $\dist(v_i, \span_B(J)) \leq \eps$. Using the bound on the distances of elements of $V'$ to $L'$ and the bound $B$ on the coefficients in the linear combinations in $\span_B(J)$, we get that $\dist(v,L') \leq \eps + O( qB \cdot (q \eps/\delta^{1.5}) ) = O( q^2 B \eps / \delta^{1.5})$. This completes the proof of Theorem~\ref{thm-apxlcc}.

\ignore{
\section{Error-correction over infinite fields}\label{sec-codesreal}

While coding theory deals mainly with codes over finite fields, the need in numerous applications to transmit real-valued signals in a way which is noise-tolerant has lead to the study of such codes over the reals.
Recent  breakthrough in understanding such codes arose from the new area of compressed sensing, initiated by Donoho and Candes-Tao. We give a quick survey of real-valued codes and how stability questions naturally arise in this setting. Given this understanding, exploring the local variants, well motivated and studied for finite fields, such as locally testable, decodable and correctable codes, naturally suggests itself. 

The notion of a code over the Reals is defined in complete analogy with the definition over finite fields: A $w$-error-correcting code of dimension $d$ and block length
$n$ over the reals is given by a linear
map $A : \R^d \rightarrow \R^n$, such that for each $f \in \R^d$, $f
\neq 0$, $\| Af \|_0 > 2w$. (Thus the Hamming distance of the code is at least $2w+1$).
The rate of the code is the ratio
$d/n$. $A$ may be viewed as an $n\times d$ matrix which is called the generating matrix of the code.
Given a received word $y = A f + e$ with $\|e\|_0 \le w$, one
can recover the message $f$ as the solution $x$ to optimization
problem:
\[ \min_{x \in \R^d} \|y - A x \|_0 \ . \]

In the Compressed Sensing view (which in coding theory is called `syndrome decoding'), the sparse vector $e$ is actually considered the (length $n$) message, and upon receiving the vector $g=A^{\perp} y = A^{\perp} e$ we must find the sparsest element $z$ satisfying  $z= A^{\perp} y$, which in a code like above would be $e$. Note that $g$ can be easily computed from $y$. Here $A^{\perp}$ is the parity-check matrix of the code.
The above non-convex optimization problem is NP-hard to solve in
general. Quite remarkably, if the code $A$ meets certain conditions (which are met with high probability e.g. by a random matrix whose entries are standard independent Gaussians or even random signs),
one can recover $f$ by solving the linear program 
\[ \min_{x \in \R^d} \|y - A x \|_1 \ . \] (The above
$\ell_1$-minimization task, which is easily written as a linear
program, is often called {\em basis pursuit} in the literature.)  Note
that we are not restricting the magnitude of erroneous entries in $e$,
only that their number is at most $w$. 

However the above is clearly a very stylized model, as when transmitting real values one can expect inaccuracies and noise besides the adversarially changed $w$ positions of the output. Noise and round-off can affect all coordinates of the input signal $f$, the output signal $y$, and even the entries of the matrix $A$ when they are sampled from e.g. the Gaussian distribution. 
As it happens, almost nothing is lost when allowing these errors.
By the virtue of their probabilistic constriction and analysis these random codes are stable under small changes in $A$, and moreover since all $A$'s entries are bounded input errors can be captured in output errors. Thus one focuses on the more realistic model in which the output error $e$ is composed of two parts, $e=e' + e''$, where $e'$ is a $w$-sparse error as before, whose entries are completely arbitrary, and $e''$ is a global error vector, which may be non-zero in all coordinates, but has small norm. In this case one cannot recover the message $f$ exactly, and we would like the difference of the decoded message from the original one $f$ should have small norm as well. 
  
Many papers, e.g. \cite{CT05, RV05, KT07, DMT07, GLW09} give different decoding algorithms which tolerate such errors in different norms. The upshot of all is that there are such stable real-valued codes which achieve the gold-standard of coding theory of constant rate codes with efficient encoding and decoding algorithms from a linear number of errors of arbitrary magnitude. Their stability means that small global error of norm $\alpha$ translates to distance $O(\alpha)$ in the decoding, which can be achieved both for the $L_1$ and $L_2$ norms.

The notion of  stable locally correctable codes over the reals (and complexes), which we have introduced above, is a natural extension of this line of work. In our notation the rows of the generating matrix  $A$ are the set of points $v_1, v_2, \cdots v_n \in \R^d$. The `stability' of the codes guarantees that small perturbations in the codeword (in Euclidean norm) have small effect on the decoding process.

}

\bibliographystyle{alpha}

\bibliography{approxsg}

\end{document}